\documentclass[12pt,a4paper]{amsart}
\usepackage{graphicx}
\usepackage{color}
\usepackage{enumerate}
\usepackage{amssymb}
\hoffset=-2cm\leftmargin-3cm
\voffset=-2cm\topmargin2cm 
\textwidth16.6cm\textheight21cm
\linespread{1.1}
\usepackage[colorlinks=true]{hyperref}
\hypersetup{urlcolor=blue, citecolor=blue, draft=false}

\newtheorem{theorem}{Theorem}[section]
\newtheorem{lemma}{Lemma}[section]
\newtheorem{corollary}{Corollary}[section]

\theoremstyle{remark}

\theoremstyle{remark}\newtheorem{remark}{Remark}[section]

\usepackage{subfigure}
\theoremstyle{definition}

\begin{document}
\title[Radii for sections of functions convex in one
direction]{Radii for sections of functions convex in one direction}

\author[Prachi  Prajna Dash, Jugal  Kishore  Prajapat and Naveen Kumari]{Prachi Prajna Dash$^\ddagger$, Jugal Kishore Prajapat$^\dagger$ and Naveen Kumari}

\address{$^\ddagger$Department of Mathematics, Central University of Rajasthan, Bandarsindri, Kishangarh-305817, Dist.- Ajmer, Rajasthan, India}
\email{prachiprajnadash@gmail.com, jkprajapat@gmail.com}

\date{}

\begin{abstract}
Let $\mathcal{G}(\alpha)$ denote the family of functions $ f(z)$ in the open unit disk $\mathbb D :=\{z\in\mathbb{C}: |z|<1\}$ that satisfy $ f(0)=0= f'(0)=1$ and
\[\Re \left(1+ \dfrac{z f''(z)}{ f'(z)}\right)<1+\dfrac{\alpha}{2} , \quad z\in \mathbb D.\]
We determine the disks $|z|<\rho_n$ in which sections $ s_n(z; f)$ of $ f(z)$ are convex, starlike, and close-to-convex of order $\beta\;(0\le \beta< 1)$. Further, we obtain certain inequalities of sections in the considered class of functions.
\end{abstract}

\subjclass[2010]{30C45, 30C20, 30C75, 30C80}    
\keywords{Univalent function, Function convex in one direction, Partial sum, Radius of convexity, Radius of starlikeness, Radius of close-to-convexity.}        

\maketitle
\section{Introduction}
\setcounter{equation}{0}

Let $\mathcal{A}$ denote the family of analytic functions in the open unit disk $\mathbb{D}=\{z\in\mathbb{C}: |z|<1\}$, satisfying $ f(0)=0= f'(0)-1$, and having the form
\begin{equation}\label{taylor}
 f(z)=z+\sum_{k=2}^\infty a_k \,z^k,  \quad a_k \in \mathbb{C}.
\end{equation}
The subclass of $\mathcal{A}$ consisting of all univalent functions $ f(z)$ in $\mathbb{D}$ is denoted by $\mathcal{S}$. A domain $D$ in $\mathbb{C}$ is called starlike with respect to origin or simply starlike if every line segment joining the origin to any point in $D$ lies completely inside $D$. A function $ f\in \mathcal{A}$ is called starlike if $ f(\mathbb{D})$ is a starlike domain. The subclass of starlike functions in $\mathcal{A}$ is denoted by $\mathcal{S}^*$ and satisfies the differential inequality $\Re \left( z  f'(z)/ f(z) \right)>0$, for all $z\in\mathbb{D}$. A function $ f\in \mathcal{A}$ is convex if $ f(\mathbb{D})$ is starlike with respect to every point inside the domain $ f(\mathbb{D})$. The subclass of convex functions in $\mathcal{A}$ is denoted by $\mathcal{K}$ and satisfies the inequality $\Re \left(1+z  f''(z)/ f'(z)\right)>0$, for all $z\in\mathbb{D}$. For a given $0\le \beta<1$, a function $ f \in \mathcal{A}$ is called a starlike function of order $\beta$ and denoted by $ f \in \mathcal{S}^*(\beta)$ if and only if $\Re \left(z  f'(z)/ f(z) \right)>\beta$ for all $z\in \mathbb{D}$. Similarly, for a given $0\le \beta<1$, a function $ f \in \mathcal{A}$ is called a convex function of order $\beta$, denoted by $ f \in \mathcal{K}(\beta)$ if and only if $\Re \left(1+z  f''(z)/ f'(z)\right)>\beta$ for all $z \in \mathbb{D}$. A domain $D$ is called convex in the direction $\theta \; (0\leq \theta<\pi)$ if every line parallel to the line through $0$ and $e^{i\theta}$ has a connected or empty intersection with $D$. A function $ f\in \mathcal{A}$ is known to be convex in the direction $\theta \; (0\leq \theta<\pi)$ if it maps $\mathbb{D}$ to a domain convex in the direction $\theta$.

A function $ f\in \mathcal{A}$ is close-to-convex in $\mathbb{D}$ if $\mathbb{C} \setminus  f(\mathbb{D})$ can be represented as a union of non-crossing half lines. For every convex function $\psi$, a close-to-convex function $ f\in \mathcal{A}$ satisfies the inequality $\Re \left( e^{i\theta} f'(z)/\psi'(z)\right)>0$ for all $\theta \in \mathbb{R}$. The subset of $\mathcal{A}$ consisting of close-to-convex functions is denoted by $\mathcal{C}$. It is well known that close-to-convex functions are univalent in $\mathbb{D}$, but not necessarily the converse. In particular, if $ f \in \mathcal{A}$ and $\Re(e^{i\theta} f'(z))>0$ for $z\in \mathbb{D}$ and $\theta \in \mathbb{R}$, then $ f$ is close-to-convex and hence univalent in $\mathbb{D}$. It is easy to note that $\mathcal{K} \subset \mathcal{S}^* \subset \mathcal{C} \subset \mathcal{S}$ \cite{dur}. Now again, for a given $0\le \beta<1$, a function $ f \in \mathcal{A}$ is called close-to-convex of order $\beta$, denoted by $ f\in \mathcal{C}(\beta)$, if $\Re \left(e^{i\theta} f'(z)/\psi'(z)\right)>\beta$ for all $\theta \in \mathbb{R}$. 

A function $ f$ is subordinate to the function $\psi$, written as $ f \prec \psi,$ if there exists a Schwarz function $w$, with $w(0)=0$ and $|w(z)| \leq |z|$, such that $ f(z)=\psi(w(z))$ for all $z \in \mathbb{D}$. If $\psi \in \mathcal{S}$, then the following relation holds true:
\[ f(z) \prec \psi(z) \quad \Longleftrightarrow \quad  f(0) = \psi(0) \;\;\mbox{and} \;\;  f(\mathbb{D}) \subset \psi(\mathbb{D}).\] 

Let $E\subset \mathcal{A}$ and $P$ denote the property of the image of $ f \in E$. The largest $\gamma_ f$ such that $ f \in E$ has a property $P$ in each disk $\mathbb{D}_{\rho}= \{z \in \mathbb{C}: \, |z|<\rho\}$ for $0<\rho\leq \gamma_ f$ is known as the radius of the property $P$ of $ f$. The number $\gamma := \mbox{inf}\{\gamma_ f: \;  f \in E\}$ is the radius of the property $P$ of the class $E$. If $\mathcal{G}$ is the class of all $ f \in E$ characterized by the property $P$, then $\gamma$ is called the $\mathcal{G}$-radius of the class $E$.

The study of Umezawa \cite{um} showed that a function $ f$ is locally univalent, having the form as in \eqref{taylor} and satisfying
\begin{equation}\label{uiq}
\gamma > \Re \left( 1+ \dfrac{z  f''(z)}{ f'(z)} \right) > -\;\dfrac{\gamma}{2\gamma-3},
\end{equation}
where $\gamma$ is a real number not less than $3/2$, then $ f$ is analytic and univalent in $\mathbb{D}$. Moreover, $ f$ maps every subdisk of $\mathbb{D}$ into a domain convex in one direction. Restricting $\gamma$ to $3/2$ (as many cases of \eqref{uiq} can be drawn by allowing different values of $\gamma\geq 3/2$), the subclass $\mathcal{G}$ of $\mathcal{S}$ satisfying
\begin{equation}\label{geq}
\Re \left( 1+ \dfrac{z  f''(z)}{ f'(z)} \right) < \dfrac{3}{2}, \quad z\in \mathbb{D},
\end{equation}
is obtained. Ozaki \cite{oz} showed that functions in $\mathcal{G}$ are univalent in $\mathbb{D}$. Thereafter, Singh and Singh {\cite[Theorem 6]{sig}} showed that functions in $\mathcal{G}$ are starlike in $\mathbb{D}$. 

The $n$-th section (partial sum) of $ f\in \mathcal{A}$ is the polynomial 
\begin{equation}\label{partial_sum}
 s_n(z; f)=z+\sum_{k=2}^n a_k \,z^k, \quad z\in \mathbb{D}.
\end{equation} 
The second section $ s_2(z; f)$ of the Koebe function $k(z)=z/(1-z)^2$ is univalent in $\mathbb{D}_{1/4}$. But as $ s'_2(-1/4; f)=0$, it is not univalent in any disk larger than $\mathbb{D}_{1/4}$. Proceeding from it, Szeg\"{o} \cite{szego} proved that $ s_n(z; f)$ of $ f \in\mathcal{S}$ are univalent in the disk $\mathbb{D}_{1/4}$, and the number $1/4$ cannot be replaced by a larger one. Somewhat later, Robertson \cite{robertson} showed that $ s_n(z; f)$ is univalent in $|z|<1-(3/n)\log n$ for all $n \ge 5$. The Koebe function is an extremal for many subclasses of $ f\in\mathcal{S}$, but Bshouty and Hergartner \cite{bshouty} showed that the Koebe function is not extremal for the radius of univalency of $ s_n(z; f)$ for $ f \in \mathcal{S}$. As shown in \cite[p. 246]{dur}, the exact (largest) radius of univalency of $ s_n(z; f)$ for $ f \in \mathcal{S}$ remains an open problem. Therefore, this is an area of interest.

Recently, Obradovi\'{c} and Ponnusamy \cite{obra2} showed that each section $ s_n(z; f)$ of $ f \in \mathcal{G}$ is starlike in the disk $|z|\leq 1/2$ for $n\geq 12$ and $\Re( s_n'(z))>0$ holds in the disk $|z|\leq 1/2$ for $n\geq 13$. Results for the best approximation of the generalized version of class $\mathcal{G}$ have been obtained in \cite{kar}. Also, it is shown in \cite{mah} that the radius of convexity of class $\mathcal{G}$ is $\mathbb{D}_{1/2}$ and it is irreplaceable with a larger disk.

The generalized version of the class $\mathcal{G}$ can be obtained from \cite{obra}, which is denoted as $\mathcal{G}(\alpha)$, consisting of locally univalent functions $ f \in \mathcal{A}$, which satisfies
\begin{equation}\label{geqa}
\Re \left( 1+ \dfrac{z  f''(z)}{ f'(z)} \right) < 1+\dfrac{\alpha}{2},\;\; z\in \mathbb{D},
\end{equation}
for $\alpha\in \mathbb{R}$. Note that $\mathcal{G}(\alpha)\ne \emptyset$ for $\alpha>0$, as the identity function satisfies \eqref{geqa} for $\alpha>0$. Here the class $\mathcal{G}\equiv \mathcal{G}(1)$. Many results, including sharp coefficient bounds for $ f\in \mathcal{G}(\alpha)$, sharp bounds for the Fekete-Szeg\"{o} functional for functions in $\mathcal{G}(\alpha)$ with complex parameters, a convolution characterization for functions $ f\in \mathcal{G}(\alpha)$, and sufficient coefficient conditions for $ f$ to be in $ \mathcal{G}(\alpha)$, have been shown in \cite{obra}. Also, as shown in \cite{obra}, each section $ s_n(z; f)$ of $ f\in \mathcal{G}$ is starlike in the disk $|z|\le 1/2$ for $n \ge 11$ and close-to-convex (and hence univalent) in the disk $|z|\le 1/2$ for $n \ge 11$. 

As discussed recently in \cite{kar}, $\mathcal{G}(\alpha)\subset \mathcal{G}$ whenever $\alpha\in(0,1]$. That is, $ f\in \mathcal{G}(\alpha)$ is starlike and univalent in $\mathbb{D}$, whenever $\alpha\in(0,1]$. Kargar et~al. \cite{kar} showed (with the help of an example) that $ f\in \mathcal{G}(\alpha)$ doesn't contain only univalent functions in $\mathbb{D}$, when $\alpha\ge 2$. They \cite{kar} showed that the radius of univalence of $\mathcal{G}(\alpha)$ is at least $1/\alpha$ when $\alpha\in [1,4.952)$. Also, they have established the best approximation of an analytic function by functions in a subclass of $\mathcal{G}(\alpha)$.

Here we obtain radii of convexity, starlikeness, and close-to-convexity of order $\beta$ for all sections ($ s_n(z; f)$, $n\ge 2$) of functions in the class $\mathcal{G}(\alpha)$ along with some properties of it. \textit{Section 2} contains all known results that were used in our obtained results. \textit{Section 3} is dedicated to all of our obtained results with their detailed proofs.

 \medskip
\section{Known results}

We use the following known results in our obtained results: 
\begin{lemma}\cite{obra}\label{lemma2.1}
If $ f(z)=z+\sum_{k=2}^{\infty} a_k z^k \in \mathcal{G}(\alpha)$, then
\[|a_n|\leq \dfrac{\alpha}{n(n-1)}, \quad n \ge 2.\]
The inequality is sharp and equality is attained for a function $ f$, where \linebreak
$ f'(z)=(1-z^{n-1})^{\alpha/(n-1)},$ $n \ge 2$.
\end{lemma}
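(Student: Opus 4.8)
The plan is to recast the membership condition as a statement about a function of positive real part, integrate to obtain a closed form for $f'$, and then read off $na_n$ as a Taylor coefficient of $f'$. Since $\alpha>0$ and $\Re\left(1+zf''(z)/f'(z)\right)<1+\alpha/2$ forces $\Re\left(zf''(z)/f'(z)\right)<\alpha/2$, the function $p(z):=1-\frac{2}{\alpha}\,\frac{zf''(z)}{f'(z)}$ satisfies $p(0)=1$ and $\Re p(z)>0$ in $\mathbb D$; equivalently $zf''/f'\prec \alpha z/(1+z)$. Writing $p(z)=1+\sum_{k\ge1}p_kz^k$, the Carathéodory inequalities give $|p_k|\le 2$, and $zf''(z)/f'(z)=\frac{\alpha}{2}\bigl(1-p(z)\bigr)=-\frac{\alpha}{2}\sum_{k\ge1}p_kz^k$. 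Integrating $f''/f'$ then yields the representation $f'(z)=\exp\!\left(-\frac{\alpha}{2}\sum_{m\ge1}\frac{p_m}{m}\,z^m\right)$.

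Next I would pass to coefficients. Since $f'(z)=1+\sum_{k\ge2}ka_kz^{k-1}$, the asserted bound is equivalent to $\bigl|[z^{n-1}]f'(z)\bigr|=|na_n|\le \alpha/(n-1)$. Comparing coefficients of $z^{n-1}$ in the trivial identity $zf''(z)=\bigl(zf''(z)/f'(z)\bigr)f'(z)$ gives the recursion
\[
n(n-1)a_n=-\frac{\alpha}{2}\sum_{j=1}^{n-1}p_j\,(n-j)\,a_{n-j},\qquad a_1=1,
\]
which exhibits $na_n$ as a fixed polynomial in $p_1,\dots,p_{n-1}$. The extremal function $f'(z)=(1-z^{n-1})^{\alpha/(n-1)}$ corresponds to $p(z)=(1+z^{n-1})/(1-z^{n-1})$, that is, to $p_{n-1}=2$ and $p_1=\cdots=p_{n-2}=0$; for this choice only the term $j=n-1$ survives and one obtains $n(n-1)a_n=-\alpha$, confirming both the value $|a_n|=\alpha/(n(n-1))$ and the sharpness claim (after checking directly that this $f$ lies in $\mathcal G(\alpha)$).

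It remains to prove that this choice maximizes $|na_n|$, and this is the step I expect to be the main obstacle. Inserting the crude estimate $|p_j|\le2$ directly into the recursion is too wasteful: it lets every $p_j$ interfere constructively and yields only the non-sharp bound $|a_n|\le \frac{\alpha}{n(n-1)}\binom{n-2+\alpha}{n-2}$. To recover the sharp constant one must use that $p_1,\dots,p_{n-1}$ are not independent but are the moments $p_k=2\int_{-\pi}^{\pi}e^{-ikt}\,d\mu(t)$ of a single probability measure $\mu$, so that the lower-order convolution terms cannot reinforce the pure power term beyond what the extremal already achieves. A convenient way to organize this is to first reduce to the case $\alpha=1$: the substitution $G(z)=\int_0^z (f'(t))^{1/\alpha}\,dt$ places $G$ in $\mathcal G=\mathcal G(1)$, since $zG''/G'=\frac1\alpha\,zf''/f'$ has real part $<1/2$, and then $f'=(G')^{\alpha}$. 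I would carry out the extremal analysis of the polynomial above under this Herglotz constraint, verify that equality forces $p(z)=(1+z^{n-1})/(1-z^{n-1})$, and thereby obtain the sharp estimate $|na_n|\le\alpha/(n-1)$ for the admissible range of $\alpha$.
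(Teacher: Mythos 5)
This lemma is quoted in the paper from the reference [obra] (Obradovi\'c--Ponnusamy--Wirths) without an included proof, so the comparison must be against the source's argument, which is Clunie's method. Your setup is sound as far as it goes: the Carath\'eodory function $p(z)=1-\tfrac{2}{\alpha}zf''(z)/f'(z)$, the coefficient recursion, the observation that termwise use of $|p_j|\le 2$ is too lossy, and the verification that $f'(z)=(1-z^{n-1})^{\alpha/(n-1)}$ lies in $\mathcal G(\alpha)$ and attains $|a_n|=\alpha/(n(n-1))$ are all correct. But the actual content of the lemma --- the inequality $|a_n|\le\alpha/(n(n-1))$ for \emph{every} $f\in\mathcal G(\alpha)$ --- is never established. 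Your final paragraph (``I would carry out the extremal analysis \dots under this Herglotz constraint'') is a plan, not a proof, and it defers exactly the hard step. Maximizing a degree-$(n-1)$ polynomial functional in $p_1,\dots,p_{n-1}$ over the Carath\'eodory class is a genuinely delicate variational problem; nothing in your write-up indicates how the measure-theoretic constraint would be exploited, so the argument has a gap precisely where the lemma lives. The auxiliary reduction to $\alpha=1$ via $G'=(f')^{1/\alpha}$ also buys nothing for coefficients: $f'=(G')^{\alpha}$ mixes all Taylor coefficients of $G'$, so a bound on the coefficients of $G$ does not transfer to a bound on those of $f$.

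The way to close the gap is to leave the Carath\'eodory coefficients aside and rewrite your identity with a Schwarz function, which is how the cited source proceeds. From $p=(1+w)/(1-w)$ one gets $zf''(z)=w(z)\bigl(zf''(z)-\alpha f'(z)\bigr)$ with $|w(z)|\le|z|$, $w(0)=0$. Since $w$ vanishes at the origin, the coefficients of $z^{k-1}$, $k\le n$, on the left depend only on coefficients of order $<n-1$ of the right factor, and Clunie's truncation plus Parseval on $|z|=r\to 1$ yields
\begin{equation*}
\sum_{k=2}^{n}\bigl[k(k-1)\bigr]^2|a_k|^2\;\le\;\alpha^2+\sum_{k=2}^{n-1}k^2\bigl[(k-1-\alpha)^2-(k-1)^2\bigr]\,|a_k|^2 .
\end{equation*}
For $0<\alpha\le 2$ (in particular for $\alpha\in(0,1]$, the range relevant to the paper) each bracket $(k-1-\alpha)^2-(k-1)^2=\alpha\bigl(\alpha-2(k-1)\bigr)$ is nonpositive, so the sum on the right can be discarded and $[n(n-1)]^2|a_n|^2\le\alpha^2$ follows at once, i.e.\ $|a_n|\le\alpha/(n(n-1))$, with your sharpness example completing the lemma. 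This single $L^2$ inequality replaces the extremal analysis you postponed; without it (or an equivalent), your proposal does not prove the stated bound.
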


\begin{lemma}\cite{obra, kar} 
If $ f(z) \in \mathcal{G}(\alpha)$, then the following inequalities and subordination results hold:
\begin{align}
&\left| \dfrac{z  f''(z)}{\alpha  f'(z)}+ \dfrac{|z|^2}{1-|z|^2} \right| \le \;  \dfrac{|z|}{1-|z|^2}, \label{lemma2.2d} \\
& f'(z)  \prec   \; (1-z)^{\alpha}, \label{lemma2.2b} \\
&\dfrac{ f(z)}{z} \prec  \;\dfrac{1-(1-z)^{1+\alpha}}{z(1+\alpha)}, \label{lemma2.2c}\\
&\dfrac{z  f'(z)}{ f(z)} \prec   \; \dfrac{(1+\alpha)(1-z)}{1+\alpha-z}. \label{lemma2.2a}
\end{align}
\end{lemma}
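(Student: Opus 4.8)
The plan is to obtain all four relations from a single subordination. Since $\alpha>0$, the defining inequality of $\mathcal{G}(\alpha)$ reads $\Re\left(zf''(z)/(\alpha f'(z))\right)<\tfrac12$, and $g(z):=zf''(z)/(\alpha f'(z))$ is analytic with $g(0)=0$. The M\"obius map $\phi(z)=-z/(1-z)$ takes $\mathbb{D}$ univalently onto the half-plane $\{\,w:\Re w<\tfrac12\,\}$ with $\phi(0)=0$, so the subordination principle gives the relation
\[\frac{zf''(z)}{\alpha f'(z)}\prec\frac{-z}{1-z},\]
which I denote by $(\star)$. For \eqref{lemma2.2d} I would image the subdisk $|z|\le r$ under $\phi$: completing the square shows that $\phi(\{\,|z|\le r\,\})$ is the closed disk centred at $-r^2/(1-r^2)$ of radius $r/(1-r^2)$, and reading off this inclusion with $r=|z|$ gives \eqref{lemma2.2d} verbatim.

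For \eqref{lemma2.2b} I would integrate $(\star)$. As $\Re g<\tfrac12$, the function $P:=1-2g$ satisfies $P(0)=1$ and $\Re P>0$, so the Herglotz representation gives $P(z)=\int_{|x|=1}\frac{1+xz}{1-xz}\,d\mu(x)$ for a probability measure $\mu$. Using $\frac1\alpha(\log f')'=g/z=(1-P)/(2z)$ and integrating termwise yields
\[\frac1\alpha\log f'(z)=\int_{|x|=1}\log(1-xz)\,d\mu(x).\]
Because $-\log(1-z)$ is convex (indeed $1+z(-\log(1-z))''/(-\log(1-z))'=1/(1-z)$ has positive real part), the image $\log(1-\mathbb{D})$ is a convex domain containing each point $\log(1-xz)$; hence its barycentre $\frac1\alpha\log f'(z)$ lies there too, so $\frac1\alpha\log f'\prec\log(1-z)$. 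Exponentiating (the same Schwarz function serves both) gives $f'\prec(1-z)^\alpha$.

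For \eqref{lemma2.2c} I would realise both members through the averaging (Libera) operator $L[\varphi](z)=\frac1z\int_0^z\varphi(t)\,dt$: since $f(0)=0$ we have $f(z)/z=L[f']$, while $\frac1z\int_0^z(1-t)^\alpha\,dt=\frac{1-(1-z)^{1+\alpha}}{(1+\alpha)z}$ is precisely the stated majorant. For $\psi(z)=(1-z)^\alpha$ one finds $1+z\psi''/\psi'=(1-\alpha z)/(1-z)$, whose image is the half-plane $\{\,w:\Re w>(1+\alpha)/2\,\}$, so $\psi$ is convex on the central range $0<\alpha\le1$. Invoking the standard fact that $L$ preserves subordination when the majorant is convex, \eqref{lemma2.2b} upgrades to $f(z)/z=L[f']\prec L[(1-z)^\alpha]$, which is \eqref{lemma2.2c}; for $\alpha>1$ one instead reruns the barycentre argument of the previous paragraph applied to $L[f']$.

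The relation \eqref{lemma2.2a} is the genuine obstacle. Writing $G=zf'/f$, the logarithmic-derivative identity $G+zG'/G=1+zf''/f'$ together with $(\star)$ turns the problem into the first-order differential subordination
\[G(z)+\frac{zG'(z)}{G(z)}\prec 1-\frac{\alpha z}{1-z},\qquad G(0)=1,\]
whose right-hand side has image $\{\,w:\Re w<1+\tfrac\alpha2\,\}$. I would feed this into the Miller--Mocanu admissibility lemma with candidate dominant $R(z)=(1+\alpha)(1-z)/(1+\alpha-z)$: were $G\not\prec R$, there would exist $z_0\in\mathbb{D}$, $|\zeta_0|=1$ and $m\ge1$ with $G(z_0)=R(\zeta_0)$ and $z_0G'(z_0)=m\zeta_0R'(\zeta_0)$, and a contradiction would follow once one verifies $\Re\left(R(\zeta_0)+m\zeta_0R'(\zeta_0)/R(\zeta_0)\right)\ge 1+\tfrac\alpha2$ for every boundary point $\zeta_0$ and every $m\ge1$. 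Checking this boundary inequality — equivalently, that the simple M\"obius function $R$ dominates the true best dominant $\frac{(1+\alpha)z(1-z)^\alpha}{1-(1-z)^{1+\alpha}}$ — is the technical heart of the argument and where the estimates are heaviest. Since $R(\mathbb{D})$ is the disk $\{\,w:|w-\tfrac{1+\alpha}{2+\alpha}|<\tfrac{1+\alpha}{2+\alpha}\,\}$, the conclusion simultaneously records the sharp lower bound $\Re\left(f(z)/(zf'(z))\right)>\tfrac{2+\alpha}{2(1+\alpha)}$.
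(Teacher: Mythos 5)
First, a point of reference: the paper gives no proof of this lemma at all --- it is imported verbatim from \cite{obra, kar} --- so your argument is an independent reconstruction, not a deviation from an internal one. Judged on its own terms, your parts for \eqref{lemma2.2d}, \eqref{lemma2.2b} and \eqref{lemma2.2c} are correct. The master subordination $zf''(z)/(\alpha f'(z))\prec -z/(1-z)$ is exactly right (it encodes $\Re\bigl(zf''/(\alpha f')\bigr)<\tfrac12$ via the half-plane map), the subdisk-image computation yields \eqref{lemma2.2d} verbatim, the Herglotz/barycentre argument for \eqref{lemma2.2b} is sound (all the facts you need --- $-\log(1-z)$ convex univalent, barycentres of measures carried by a compact subset of an open convex set staying inside --- are true), and \eqref{lemma2.2c} for $0<\alpha\le1$ follows as you say from the Hallenbeck--Ruscheweyh theorem, since $(1-z)^\alpha$ is convex there. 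One caveat: your fallback for $\alpha>1$ in \eqref{lemma2.2c} (``rerun the barycentre argument'') does not work, because $f'$ is the \emph{exponential} of an average of $\log(1-xz)$, not an average of the functions $(1-xz)^\alpha$, so $f(z)/z$ is not a barycentre of the corresponding transforms. Nothing is lost, though: \eqref{lemma2.2a} forces $\Re(zf'/f)>0$, while functions in $\mathcal{G}(\alpha)$ need not even be univalent for $\alpha\ge2$, so the lemma is intrinsically a statement for $0<\alpha\le1$ (the only range the paper uses); state that restriction instead of trying to remove it.

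The genuine gap is \eqref{lemma2.2a}: what you wrote is a plan, not a proof. The entire content of the Miller--Mocanu route is the boundary admissibility inequality $\Re\bigl(R(\zeta)+m\zeta R'(\zeta)/R(\zeta)\bigr)\ge 1+\alpha/2$ for $|\zeta|=1$, $m\ge1$, and you leave it unverified, flagging it as ``where the estimates are heaviest.'' Until it is checked, the hardest of the four subordinations is unproved. In fact the check is short and clean, so it should be carried out: from $R(z)=(1+\alpha)(1-z)/(1+\alpha-z)$ one gets $\zeta R'(\zeta)/R(\zeta)=-\alpha\zeta/\bigl((1+\alpha-\zeta)(1-\zeta)\bigr)$; writing $\zeta=e^{i\theta}$, $D=|1+\alpha-\zeta|^2=(1+\alpha)^2-2(1+\alpha)\cos\theta+1$, and using $1/(1-e^{i\theta})=\tfrac12+\tfrac{i}{2}\cot(\theta/2)$ together with $\sin\theta\cot(\theta/2)=1+\cos\theta$, one finds
\begin{equation*}
\Re\frac{\zeta R'(\zeta)}{R(\zeta)}=\frac{\alpha(2+\alpha)}{2D}>0,
\qquad
\Re R(\zeta)=\frac{(1+\alpha)(2+\alpha)(1-\cos\theta)}{D}.
\end{equation*}
The first identity shows the worst case is $m=1$; then the required inequality, after multiplying through by $2D/(2+\alpha)$, becomes $2(1+\alpha)(1-\cos\theta)+\alpha\ge D$, i.e.\ $2+3\alpha\ge 2+2\alpha+\alpha^2$, i.e.\ $\alpha^2\le\alpha$. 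So admissibility holds for every $\theta$ and every $m\ge1$ exactly when $0<\alpha\le1$: your candidate dominant does work, the $\theta$-dependence cancels entirely, and this two-line computation --- not heavy estimates --- is the missing heart of the argument (it also shows $\alpha\le1$ is used essentially, not for convenience). Two further prerequisites you should settle when invoking the Miller--Mocanu lemma: $G=zf'/f$ must be analytic in $\mathbb{D}$ with $G(0)=1$, i.e.\ $f(z)\ne0$ for $0<|z|<1$, which for $0<\alpha\le1$ follows from $\mathcal{G}(\alpha)\subset\mathcal{G}$ and Ozaki's univalence theorem quoted in the paper's introduction; and the boundary point $\zeta_0=1$, where $R(\zeta_0)=0$ makes $\zeta_0R'(\zeta_0)/R(\zeta_0)$ meaningless, must be excluded --- which it is, because $G$ is zero-free (local univalence gives $f'\ne0$), so $G(z_0)=R(1)=0$ can never occur.
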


\begin{lemma}\label{lemma2.3}
If $\rho\in [0,1)$ and $n \ge 2$, then the following sequences are decreasing:
\begin{align*}
    S_{1}(n,\rho)=\sum^{\infty}_{k=1}\dfrac{\rho^{k+n}}{(k+n)(k+n-1)}, \quad  S_{2}(n,\rho)=\sum ^{\infty}_{k=1} \dfrac{\rho^{k+n-1}}{k+n-1}.
\end{align*}
\begin{proof}
Note that,
\begin{align*}
S_{1}(n+1,\rho)-S_{1}(n,\rho)=& \sum_{k=1}^{\infty}\left( \dfrac{\rho}{(k+n+1)(k+n)}-\dfrac{1}{(k+n)(k+n-1)}\right)\rho^{k+n}\\
=& \sum_{k=1}^{\infty}\left( \dfrac{\rho(k+n-1)-(k+n+1)}{(k+n) \left((k+n)^2-1 \right) }\right)\rho^{k+n}\\
=& -\sum_{k=1}^{\infty}\left( \dfrac{(1-\rho)(k+n)+\rho+1}{(k+n) \left((k+n)^2-1 \right) } \right)\rho^{k+n}<0.
\end{align*}
Hence, $S_{1}(n,\rho)$ is a decreasing sequence for all $n \ge 2$. Similarly, we can show that $S_{2}(n,\rho)$ is a decreasing sequence.
\end{proof}
\end{lemma}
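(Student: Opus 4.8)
The plan is to recognize each $S_i(n,\rho)$ as the \emph{tail} of a single fixed convergent series, so that increasing $n$ by one merely deletes one leading term; this avoids combining the two summands of the difference into a single rational-in-$k$ coefficient and then checking its sign. For $S_1$ I would substitute $m=k+n$, under which $k\in\{1,2,\dots\}$ corresponds to $m\in\{n+1,n+2,\dots\}$, giving
\[
S_1(n,\rho)=\sum_{m=n+1}^{\infty}\frac{\rho^{m}}{m(m-1)}.
\]
Since $\rho^m/(m(m-1))\le\rho^m$ and $\sum_{m}\rho^m$ converges for $\rho\in[0,1)$, every such tail is a finite number and the reindexing is legitimate.

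With this rewriting the monotonicity is transparent: $S_1(n+1,\rho)$ is the very same tail with the single term $m=n+1$ removed, so
\[
S_1(n,\rho)-S_1(n+1,\rho)=\frac{\rho^{n+1}}{(n+1)n}\ge 0 .
\]
Because $\rho\ge 0$, this difference is non-negative, whence $S_1(n,\rho)$ is non-increasing in $n$ for each fixed $\rho\in[0,1)$; it is strictly decreasing when $\rho>0$ and constant (equal to $0$) when $\rho=0$. The substitution $m=k+n-1$ treats $S_2$ identically, yielding $S_2(n,\rho)=\sum_{m=n}^{\infty}\rho^{m}/m$ and hence $S_2(n,\rho)-S_2(n+1,\rho)=\rho^{n}/n\ge 0$.

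I expect no real obstacle here: the only step needing care is justifying the shift of summation index, which is routine once convergence for $\rho<1$ is noted. The single genuine idea is this index shift, which collapses the difference of two infinite series into one explicit non-negative term and thereby renders the sign analysis trivial.
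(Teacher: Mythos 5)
Your proof is correct, and it takes a genuinely different (and cleaner) route than the paper. The paper computes $S_{1}(n+1,\rho)-S_{1}(n,\rho)$ term by term: it puts the two fractions over a common denominator, rewrites the numerator as $\rho(k+n-1)-(k+n+1)=-\left((1-\rho)(k+n)+\rho+1\right)$, and concludes that every term of the difference is negative. Your index shift $m=k+n$ instead exhibits $S_{1}(n,\rho)$ as the tail $\sum_{m=n+1}^{\infty}\rho^{m}/\left(m(m-1)\right)$ of a single fixed convergent series, so the difference collapses to the one deleted term,
\[
S_{1}(n,\rho)-S_{1}(n+1,\rho)=\frac{\rho^{n+1}}{n(n+1)},
\]
in closed form, with no sign analysis of a rational coefficient needed; the same shift handles $S_{2}$. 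Two things your version buys: first, it is sharper about the degenerate case --- at $\rho=0$ both sequences are identically zero, so the paper's claimed strict inequality $<0$ actually fails there, while you correctly distinguish non-increasing ($\rho=0$) from strictly decreasing ($\rho>0$); since the lemma is only used downstream via $S_{1}(n,\rho)\le S_{1}(2,\rho)$, the non-strict form is all that is required. Second, your tail identification is exactly the viewpoint the paper itself adopts in the next lemma, where $S_{1}(2,\rho)$ is summed to $(1-\rho)\ln(1-\rho)+\rho-\rho^{2}/2$, so your argument fits the surrounding development at least as naturally as the paper's own.
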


\begin{lemma}
Let $ f(z)=z+\sum_{k=2}^{\infty} a_k z^k \in \mathcal{G}(\alpha)$ and $\sigma_n(z;  f)=\sum_{k=n+1}^{\infty} a_k z^k$. Then, for $n \ge 2$ and $|z|=\rho <1$, the following inequalities hold:
\begin{align}
|\sigma_n(z; f)|\; \leq &\;\; \alpha\left((1-\rho)\ln(1-\rho)+\rho-\dfrac{\rho^2}{2}\right), \label{2.4} \\
|\sigma'_n(z; f)|\; \leq & \;\; -\alpha\left(\ln(1-\rho)+\rho\right), \label{2.6} \\
|\sigma''_n(z; f)|\; \leq & \;\; \dfrac{\alpha \, \rho}{1-\rho}. \label{2.7} 
\end{align}
\end{lemma}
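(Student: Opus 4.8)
The plan is to bound each derivative of the tail $\sigma_n(z;f)$ term-by-term using the sharp coefficient estimate of Lemma~\ref{lemma2.1}, to recognize the resulting series as the sequences $S_1(n,\rho)$ and $S_2(n,\rho)$ of Lemma~\ref{lemma2.3}, and then to invoke their monotonicity in $n$ to reduce every bound to its value at $n=2$ (the largest), which can be summed in closed form via the logarithmic series $\sum_{k\ge1}\rho^k/k=-\ln(1-\rho)$.

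For \eqref{2.4} I would apply the triangle inequality with $|z|=\rho$ followed by Lemma~\ref{lemma2.1} to obtain
\[
|\sigma_n(z;f)|\le\sum_{k=n+1}^{\infty}|a_k|\,\rho^k\le\alpha\sum_{k=n+1}^{\infty}\frac{\rho^k}{k(k-1)}.
\]
Re-indexing by $m=k-n$ identifies the last sum exactly with $S_1(n,\rho)$. Since Lemma~\ref{lemma2.3} shows $S_1(n,\rho)$ is decreasing in $n$ for $n\ge2$, I get $S_1(n,\rho)\le S_1(2,\rho)$ for every $n\ge2$. It then remains to evaluate $S_1(2,\rho)=\sum_{k=3}^{\infty}\rho^k/(k(k-1))$ in closed form: the partial fraction $1/(k(k-1))=1/(k-1)-1/k$ together with the logarithmic series, minus the omitted $k=2$ term, yields precisely $(1-\rho)\ln(1-\rho)+\rho-\rho^2/2$.

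The estimate \eqref{2.6} proceeds identically after termwise differentiation: $|\sigma_n'(z;f)|\le\sum_{k=n+1}^{\infty}k|a_k|\rho^{k-1}\le\alpha\sum_{k=n+1}^{\infty}\rho^{k-1}/(k-1)$, and this last series is exactly $S_2(n,\rho)$. Its monotonicity reduces the bound to $S_2(2,\rho)=\sum_{k\ge2}\rho^k/k=-\ln(1-\rho)-\rho$, which is the asserted right-hand side. For \eqref{2.7} the factor $k(k-1)$ arising from the second derivative cancels the denominator of the coefficient bound completely, leaving $|\sigma_n''(z;f)|\le\alpha\sum_{k=n+1}^{\infty}\rho^{k-2}=\alpha\rho^{n-1}/(1-\rho)$; since $\rho^{n-1}\le\rho$ for $n\ge2$ and $\rho<1$, this is at most $\alpha\rho/(1-\rho)$.

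I expect no genuine obstacle: each case is a triangle-inequality estimate followed by the ready-made monotonicity of Lemma~\ref{lemma2.3}, and the third case even bypasses that lemma in favour of a direct geometric-series comparison. The only point demanding care is the bookkeeping in summing $S_1(2,\rho)$ and $S_2(2,\rho)$ in closed form—correctly shifting the summation index and accounting for the omitted low-order terms—so that the logarithmic series reproduces exactly the stated expressions rather than differing by a stray linear or quadratic term.
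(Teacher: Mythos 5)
Your proposal is correct and follows essentially the same route as the paper: term-by-term estimation via the coefficient bound of Lemma~\ref{lemma2.1}, identification of the resulting tails with $S_1(n,\rho)$ and $S_2(n,\rho)$, reduction to $n=2$ by the monotonicity of Lemma~\ref{lemma2.3}, and closed-form summation via the logarithmic series. Your explicit treatment of the second derivative as a direct geometric series (with $\rho^{n-1}\le\rho$ for $n\ge 2$) is exactly what the paper's ``similarly'' must mean, since Lemma~\ref{lemma2.3} contains no sequence matching that sum.
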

\begin{proof}
Using Lemma \ref{lemma2.1} and Lemma \ref{lemma2.3}, we obtain
\begin{align*}
|\sigma_n(z;  f)|  &\le \; \sum^{\infty}_{k=1}\dfrac{\alpha\, \rho^{k+n}}{(k+n)(k+n-1)}=\alpha \, S_1(n,\rho) \le \alpha \, S_1(2,\rho) \\
&=  \alpha \sum_{n=3}^{\infty} \dfrac{ \rho^k}{k(k-1)}              =\alpha\left((1-\rho)\ln(1-\rho)+\rho-\dfrac{\rho^2}{2}\right).
\end{align*}
Similarly, we can find bounds on $|\sigma'_n(z; f)|$ and $|\sigma''_n(z; f)|$.
\end{proof}

\begin{lemma}{\cite[Theorem 6.2 (Rogosinski's Theorem)]{dur}}\label{rog}
Let $ f(z)=\sum_{n=1}^{\infty} a_n z^n$ and $\psi(z)=\sum_{n=1}^{\infty} c_n z^n$ be analytic in $\mathbb{D}$, and suppose $\psi \prec  f$. Then
\[ \sum_{n=1}^n |c_n|^2 \leq \sum_{n=1}^n |a_n|^2, \quad n\in \mathbb N . \]
\end{lemma}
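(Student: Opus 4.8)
The plan is to turn the hypothesis $\psi \prec f$ into the composition identity $\psi(z) = f(\omega(z))$ for a subordinating (Schwarz) function $\omega$ with $\omega(0)=0$ and $|\omega(z)| \le |z|$ on $\mathbb{D}$, and then to read off the desired coefficient inequality as a partial-sum consequence of the fact that composition with such an $\omega$ does not increase the quadratic mean. The point is that the finiteness of the index $n$ on both sides is not an obstruction: because $\omega$ vanishes at the origin, the dependence of the $c_k$ on the $a_j$ is triangular, and this lets me replace $f$ by its $n$-th section at no cost.

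Concretely, I would first expand $[\omega(z)]^j = \sum_{k\ge j} \omega_{jk} z^k$, noting that the series starts at $z^j$ precisely because $\omega(0)=0$. Substituting into $\psi(z) = \sum_{j\ge 1} a_j [\omega(z)]^j$ gives the triangular relation $c_k = \sum_{j=1}^{k} \omega_{jk}\, a_j$. The decisive observation is that for every $k \le n$ the coefficient $c_k$ involves only $a_1, \dots, a_n$; hence the first $n$ Taylor coefficients of $\psi = f\circ\omega$ coincide with those of $g_n := f_n \circ \omega$, where $f_n(z) = \sum_{j=1}^n a_j z^j$ is the $n$-th section of $f$. Writing $g_n(z) = \sum_{k\ge 1} c_k' z^k$ with $c_k' = c_k$ for $k \le n$, and applying Parseval's identity to $g_n$, I obtain
\[
\sum_{k=1}^n |c_k|^2 \;\le\; \sum_{k=1}^\infty |c_k'|^2 \;=\; \lim_{r\to 1^-}\frac{1}{2\pi}\int_0^{2\pi} \bigl|f_n(\omega(re^{i\theta}))\bigr|^2\, d\theta .
\]

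The substance of the proof, and the step I expect to be the main obstacle, is the estimate $\frac{1}{2\pi}\int_0^{2\pi}|f_n(\omega(re^{i\theta}))|^2\, d\theta \le \frac{1}{2\pi}\int_0^{2\pi}|f_n(re^{i\theta})|^2\, d\theta$ for each fixed $r<1$ — the quadratic case of Littlewood's subordination inequality — which I would establish self-containedly rather than quote. Since $|f_n|^2$ is subharmonic, let $U$ be its Poisson extension inward from the circle $|w|=r$, so that $U$ is harmonic on $|w|<r$, agrees with $|f_n|^2$ on $|w|=r$, and satisfies $U \ge |f_n|^2$ on $|w|\le r$. By the defining property $|\omega(z)| \le |z|$ of the subordinating function (equivalently, Schwarz's lemma), $\omega$ carries $\{|z|\le r\}$ into $\{|w|\le r\}$; consequently $U\circ\omega$ is harmonic on $|z|<r$ and dominates $|f_n\circ\omega|^2$ there. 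Invoking the mean value property of $U\circ\omega$ at the origin, together with $\omega(0)=0$, gives
\[
\frac{1}{2\pi}\int_0^{2\pi}\bigl|f_n(\omega(re^{i\theta}))\bigr|^2\, d\theta \;\le\; (U\circ\omega)(0) \;=\; U(0) \;=\; \frac{1}{2\pi}\int_0^{2\pi}\bigl|f_n(re^{i\theta})\bigr|^2\, d\theta .
\]

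Letting $r\to 1^-$ and applying Parseval once more on the right identifies the last mean with $\sum_{k=1}^n |a_k|^2$, and combining with the earlier display yields $\sum_{k=1}^n |c_k|^2 \le \sum_{k=1}^n |a_k|^2$, as required. The only delicate points are the justification that $\omega$ maps the closed disk of radius $r$ into itself (immediate from the bound on $\omega$) and the interchange of the circle average with the value at the center, which follows from harmonicity of $U\circ\omega$ and a passage $\rho\to r$ in the mean value formula. An alternative, entirely linear-algebraic route reaches the same conclusion by showing that the $n\times n$ leading block of the coefficient matrix $(\omega_{jk})$ is a contraction and applying it to the vector $(a_1,\dots,a_n)$; this is really the same contraction fact in disguise, and I would keep the harmonic-majorant version as the primary argument since it isolates most cleanly where $\omega(0)=0$ is used.
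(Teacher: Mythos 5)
This lemma is quoted in the paper without proof (it is Rogosinski's Theorem, cited as Theorem~6.2 of Duren's \emph{Univalent Functions}), so there is no in-paper argument to compare against; your proposal is correct and is essentially the classical proof given in that reference. Specifically, your two ingredients --- the triangular dependence of the $c_k$ on the $a_j$ (allowing $f$ to be replaced by its $n$-th section $f_n$, since $\omega(0)=0$), followed by the $p=2$ case of Littlewood's subordination theorem proved via the subharmonicity of $|f_n|^2$ and a harmonic majorant --- are exactly the standard route, and all the delicate points (that $f_n\circ\omega$ lies in $H^2$, that $\omega$ maps $\{|z|\le r\}$ into $\{|w|\le r\}$, and the passage $\rho\to r$ in the mean value formula) are handled correctly.
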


The following result is the special case of {\cite[Lemma 5]{obra}}.
\begin{lemma}\label{lemma2.03}
Let $ f \in \mathcal{G}$ and $ s_n(z; f)$ be its nth section. Then for all $\rho \in(0, 1)$ and $n \ge 2$, we have
\[ \bigg|\dfrac{ s_n'(z; f)}{ f'(z)}-1 \bigg| \leq |z|^n\left( \dfrac{1}{n} +\left(\dfrac{ \pi}{\sqrt{6}}+1 \right) \dfrac{|z|\sqrt{2\rho-\rho^2}}{\rho^n(1-\rho)(\rho-|z|)} \right),\quad \text{for }\, |z|<\rho.\]
\end{lemma}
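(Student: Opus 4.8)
The plan is to turn the quotient into a Cauchy-type integral over the circle $|t|=\rho$ and to control it by the $L^2$ (Parseval) information that Rogosinski's theorem and the coefficient bound of Lemma \ref{lemma2.1} provide. First I would record the two facts about $f\in\mathcal G=\mathcal G(1)$ that drive everything. Since $\mathcal G=\mathcal G(1)$, the subordination \eqref{lemma2.2b} reads $f'(z)\prec 1-z$, so $f'(z)=1-w(z)$ for a Schwarz function $w$; this yields both $|f'(z)-1|=|w(z)|\le|z|$ and the lower bound $|f'(z)|\ge 1-|z|\ge 1-\rho$ for $|z|<\rho$, which is the source of the factor $1/(1-\rho)$. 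Applying Lemma \ref{rog} to $f'(z)-1\prec -z$ gives $\sum_{k\ge 2}k^2|a_k|^2\le 1$, and combining this with $|a_k|\le 1/(k(k-1))$ from Lemma \ref{lemma2.1} controls the mean square of $f'$ on $|t|=\rho$, namely $\frac1{2\pi}\int_0^{2\pi}|f'(\rho e^{i\theta})|^2\,d\theta=\sum_{k\ge1}k^2|a_k|^2\rho^{2(k-1)}$: the constant term contributes $1$ and the tail is at most $\sum_{m\ge1}\rho^{2m}/m^2\le\pi^2/6$, which is the origin of the constant $\pi/\sqrt6+1$ and of the factor $\sqrt{2\rho-\rho^2}$.

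Next I would set up the exact algebraic decomposition. Writing $\sigma_n(z;f)=\sum_{k=n+1}^{\infty}a_kz^k$, so that $s_n(z;f)+\sigma_n(z;f)=f(z)$, we have $s_n'(z;f)=f'(z)-\sigma_n'(z;f)$ and hence
\[
\frac{s_n'(z;f)}{f'(z)}-1=-\frac{\sigma_n'(z;f)}{f'(z)},\qquad \sigma_n'(z;f)=\sum_{k=n+1}^{\infty}k a_k z^{k-1}.
\]
Because $1/f'(z)=1+O(z)$, the coefficient of $z^n$ on the right is exactly $-(n+1)a_{n+1}$, and $(n+1)|a_{n+1}|\le 1/n$ by Lemma \ref{lemma2.1}. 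I would make this explicit through the identity
\[
\frac{s_n'(z;f)}{f'(z)}-1=-(n+1)a_{n+1}z^n+(n+1)a_{n+1}z^n\frac{f'(z)-1}{f'(z)}-\frac{\sigma_{n+1}'(z;f)}{f'(z)},
\]
whose first term has modulus $\le |z|^n/n$ and accounts for the clean summand, while the bracketed part is $O(z^{n+1})$ and will produce the second summand.

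To estimate the remainder I would pass to the tail $\sigma_{n+1}'(z;f)=\sum_{k\ge n+2}ka_kz^{k-1}$ and use Cauchy's coefficient formula $ka_k=\frac{1}{2\pi i}\oint_{|t|=\rho}f'(t)t^{-k}\,dt$; summing the geometric series in $z/t$ (valid for $|z|<\rho$) gives the representation
\[
\sigma_{n+1}'(z;f)=\frac{z^{n+1}}{2\pi i}\oint_{|t|=\rho}\frac{f'(t)}{t^{n+1}(t-z)}\,dt .
\]
Here the factors $|z|^{n+1}$, $\rho^{-n}$ (from $|t^{n+1}|=\rho^{n+1}$ against $|dt|=\rho\,d\theta$) and $(\rho-|z|)^{-1}$ (from $|t-z|\ge\rho-|z|$) appear. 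I would then bound the contour integral by Cauchy--Schwarz on the circle, dominating $\frac1{2\pi}\oint|f'(t)|\,|dt|/|t-z|$ by the product of the mean square of $f'$ from the first paragraph and the residual kernel, which contributes $(\pi/\sqrt6+1)\sqrt{2\rho-\rho^2}$; the companion term $(n+1)a_{n+1}z^n(f'-1)/f'$ is handled by $|f'-1|\le|z|$ and $|f'|\ge 1-\rho$. Dividing by $|f'(z)|\ge 1-\rho$ and collecting everything assembles the stated inequality.

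The main obstacle is the bookkeeping of this last step: one must arrange the Cauchy--Schwarz/Parseval estimate so that the mean square of $f'$ and the residual kernel combine into exactly $(\pi/\sqrt6+1)\sqrt{2\rho-\rho^2}$ rather than a superficially different (and possibly larger) expression, while keeping the $a_{n+1}$-term isolated as $|z|^n/n$ and not letting $1/(1-\rho)$ leak into it. Once the two $L^2$ inputs are placed correctly---$\sum_{k\ge2}k^2|a_k|^2\le1$ from Lemma \ref{rog} and $\sum_{m\ge1}\rho^{2m}/m^2\le\pi^2/6$ from Lemma \ref{lemma2.1}---the remaining manipulations are routine. (Alternatively, one may simply specialize Lemma~5 of \cite{obra} to $\alpha=1$, but I prefer the self-contained route above.)
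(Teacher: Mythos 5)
Your setup is sound---the three-term decomposition isolating $(n+1)a_{n+1}z^n$, the contour representation of $\sigma_{n+1}'$, and the pointwise bounds $|f'(z)-1|\le|z|$, $|f'(z)|\ge 1-\rho$ are all correct---but the final assembly, which you yourself flag as the ``main obstacle,'' is not bookkeeping: it fails. By Parseval,
\[
\frac{1}{2\pi}\int_0^{2\pi}\bigl|f'(\rho e^{i\theta})\bigr|^2\,d\theta=\sum_{k\ge1}k^2|a_k|^2\rho^{2(k-1)}\ge 1,
\]
since the constant term $f'(0)=1$ survives, while the kernel integral is exactly $\frac{1}{2\pi}\int_0^{2\pi}|\rho e^{i\theta}-z|^{-2}\,d\theta=(\rho^2-|z|^2)^{-1}$. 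So the best your Cauchy--Schwarz step can yield is
\[
\bigl|\sigma_{n+1}'(z;f)\bigr|\le\sqrt{1+\pi^2/6}\;\frac{|z|^{n+1}}{\rho^{n}\sqrt{\rho^2-|z|^2}},
\]
and for this (after dividing by $|f'(z)|\ge 1-\rho$) to sit under the stated second term you would need at least
$\sqrt{1+\pi^2/6}\,(\rho-|z|)\le\bigl(\pi/\sqrt6+1\bigr)\sqrt{2\rho-\rho^2}\,\sqrt{\rho^2-|z|^2}$ for all $|z|<\rho$. Letting $|z|\to0$ this forces $\sqrt{1+\pi^2/6}\le(\pi/\sqrt6+1)\sqrt{2\rho-\rho^2}$, which is false for every $\rho\le 0.29$ (the left side is $\approx1.63$; the right side tends to $0$ with $\rho$). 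The obstruction is structural: the target factor $\sqrt{2\rho-\rho^2}$ vanishes as $\rho\to0$, whereas any estimate that keeps the constant term of $f'$ inside the integral cannot produce a factor vanishing with $\rho$. The cruder variant (pull out $|t-z|\ge\rho-|z|$ first, then bound the mean modulus of $f'$ by $\pi/\sqrt6+1$) fails for the same reason, losing exactly the factor $\sqrt{2\rho-\rho^2}$. For what it is worth, the paper offers no proof to compare with: it simply quotes the lemma as the case $\alpha=1$ of Lemma 5 of \cite{obra}, so a corrected self-contained argument would be genuinely useful.

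The repair is small and uses an observation you almost made. The residues of $t^{-(n+1)}(t-z)^{-1}$ at $t=0$ and $t=z$ cancel, so $\frac{1}{2\pi i}\oint_{|t|=\rho}\frac{dt}{t^{n+1}(t-z)}=0$ and the constant may be subtracted inside your representation:
\[
\sigma_{n+1}'(z;f)=\frac{z^{n+1}}{2\pi i}\oint_{|t|=\rho}\frac{f'(t)-1}{t^{n+1}(t-z)}\,dt .
\]
Now apply the Schwarz bound on the circle, not only at the point $z$: $|f'(t)-1|=|w(t)|\le|t|=\rho$, whence $|\sigma_{n+1}'(z;f)|\le \rho\,|z|^{n+1}/\bigl(\rho^n(\rho-|z|)\bigr)$. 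Since $\rho\le\sqrt{2\rho-\rho^2}$ for $0<\rho<1$, dividing by $|f'(z)|\ge1-\rho$ places this term under the stated bound with coefficient $1$, and the leftover budget $\frac{\pi}{\sqrt6}\cdot\frac{|z|^{n+1}\sqrt{2\rho-\rho^2}}{\rho^n(1-\rho)(\rho-|z|)}$ absorbs your middle term: that term is at most $\frac{|z|^{n+1}}{n(1-\rho)}$, while $\sqrt{2\rho-\rho^2}/\bigl(\rho^n(\rho-|z|)\bigr)\ge\sqrt{2-\rho}\,\rho^{-n-1/2}\ge1$ and $\pi/\sqrt6>1/2\ge1/n$. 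With this replacement the proof closes; note that neither Parseval nor Rogosinski's theorem is then needed, so both of your ``$L^2$ inputs'' can be dropped entirely.
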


 \medskip
\section{Main results}

The following are our newly established results, along with their proofs.

\subsection{Radius properties for the sections}
\begin{theorem}\label{Th2.0}
Let $ f(z)=z+\sum_{k=2}^{\infty} a_k z^k \in \mathcal{G}(\alpha)$. Then $ s_n(z; f)$ is convex of order $\beta$ in the disk $|z|<\rho_{\mathcal{K}}$ for all $n\ge 2$, where $\rho_{\mathcal{K}}$ is the least positive root of $I_{\alpha,\beta}(\rho)= 0$ in $[0,1)$ for all $\alpha \in(0,1]$ and  $\beta \in[0,1)$, where $I_{\alpha,\beta}(\rho)$ is given by
\begin{align}\label{econ}
I_{\alpha,\beta}(\rho)&=(1-\rho)^{\alpha}\left(\rho(1+\alpha-\beta)-(1-\beta)\right) \\
&+ \alpha \rho\left((2-\beta)\rho-(1-\beta)\right)
 -\alpha(1-\beta)(1-\rho)\ln(1-\rho).\notag
\end{align}

\end{theorem}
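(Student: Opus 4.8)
The plan is to establish convexity of order $\beta$ through the usual sufficient condition. Since
\[
\Re\left(1 + \frac{z\,s_n''(z;f)}{s_n'(z;f)}\right) \ge 1 - \left|\frac{z\,s_n''(z;f)}{s_n'(z;f)}\right|,
\]
it is enough to prove that $|z\,s_n''(z;f)| \le (1-\beta)\,|s_n'(z;f)|$ for $|z|=\rho<\rho_{\mathcal K}$. The key device is to peel off the tail by writing $s_n = f - \sigma_n$, so that $s_n' = f' - \sigma_n'$ and $s_n'' = f'' - \sigma_n''$, and then to estimate by the triangle inequality $|z\,s_n''| \le |z f''| + |z\sigma_n''|$ and $|s_n'| \ge |f'| - |\sigma_n'|$. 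The advantage is that every piece is now controlled by results already in hand: the tail derivatives by \eqref{2.6} and \eqref{2.7}, and the full function by the subordination \eqref{lemma2.2b} and the inequality \eqref{lemma2.2d}.

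For the individual estimates I would proceed as follows. From \eqref{lemma2.2d} one reads off that $z f''/(\alpha f')$ lies in the disk centred at $-\rho^2/(1-\rho^2)$ of radius $\rho/(1-\rho^2)$, whence $|z f''| \le \frac{\alpha\rho}{1-\rho}\,|f'|$. From \eqref{lemma2.2b}, writing $f'(z)=(1-w(z))^\alpha$ for a Schwarz function $w$ with $|w(z)|\le\rho$, the minimum-modulus estimate $|1-w(z)|\ge 1-\rho$ yields $|f'(z)| \ge (1-\rho)^\alpha$. For the tail, \eqref{2.7} gives $|z\sigma_n''| \le \alpha\rho^2/(1-\rho)$ and \eqref{2.6} gives $|\sigma_n'| \le -\alpha(\ln(1-\rho)+\rho)$; crucially these bounds are monotone in $n$ by Lemma \ref{lemma2.3}, so the $n=2$ values dominate and the resulting radius is one and the same for every $n\ge 2$.

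Assembling these, the sufficient condition reduces to
\[
\frac{\alpha\rho}{1-\rho}\,t + \frac{\alpha\rho^2}{1-\rho} \le (1-\beta)\bigl(t + \alpha(\ln(1-\rho)+\rho)\bigr), \qquad t:=|f'(z)| \ge (1-\rho)^\alpha,
\]
equivalently $\bigl((1-\beta)-\tfrac{\alpha\rho}{1-\rho}\bigr)t \ge \tfrac{\alpha\rho^2}{1-\rho} - (1-\beta)\alpha(\ln(1-\rho)+\rho)$, whose right-hand side is positive. When the coefficient $c(\rho):=(1-\beta)-\frac{\alpha\rho}{1-\rho}$ is positive, the worst case is the smallest admissible value $t=(1-\rho)^\alpha$; substituting it and clearing the factor $1/(1-\rho)$, the polynomial terms combine as $-\alpha\rho^2-(1-\beta)\alpha\rho^2 = -\alpha\rho^2(2-\beta)$ and the inequality becomes exactly $I_{\alpha,\beta}(\rho)\le 0$. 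Reading off the radius is then immediate: $I_{\alpha,\beta}(0)=-(1-\beta)<0$ while $I_{\alpha,\beta}(\rho)\to\alpha>0$ as $\rho\to 1^-$, so a least positive root $\rho_{\mathcal K}\in(0,1)$ exists and $I_{\alpha,\beta}(\rho)<0$ on $[0,\rho_{\mathcal K})$, which is precisely convexity of order $\beta$ (the same lower bound $|s_n'|\ge|f'|-|\sigma_n'|>0$ guaranteeing $s_n'\neq 0$ there).

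The main obstacle I anticipate is justifying the choice $t=(1-\rho)^\alpha$ as the extremal value, which requires $c(\rho)>0$ throughout $[0,\rho_{\mathcal K})$. I would secure this by observing that $c(\rho)>0 \iff \rho < \frac{1-\beta}{1+\alpha-\beta}=:\rho^*$, and that $\rho_{\mathcal K}<\rho^*$: at $\rho^*$ the first term of $I_{\alpha,\beta}$ vanishes, the logarithmic term is positive, and $(2-\beta)\rho^*-(1-\beta)=\frac{(1-\beta)(1-\alpha)}{1+\alpha-\beta}\ge 0$ because $\alpha\le 1$, so $I_{\alpha,\beta}(\rho^*)>0$ and hence the least root lies to its left. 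This is exactly where the hypothesis $\alpha\in(0,1]$ enters decisively.
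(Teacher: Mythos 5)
Your proposal is correct and takes essentially the same route as the paper: the same splitting $s_n(z;f)=f(z)-\sigma_n(z;f)$, the same four estimates \eqref{lemma2.2d}, \eqref{lemma2.2b}, \eqref{2.6}, \eqref{2.7}, and the same reduction to $I_{\alpha,\beta}(\rho)\le 0$; indeed, your lower bound evaluated at the worst case $|f'(z)|=(1-\rho)^{\alpha}$ coincides algebraically with the paper's $\psi_4(\alpha,\rho)$, and your condition $\psi_4(\alpha,\rho)>\beta$ is exactly $I_{\alpha,\beta}(\rho)<0$. The only organizational difference is that you apply the triangle inequality to $z s_n''/s_n'$ directly, which forces the (correct and cleanly handled) extra check that $\rho_{\mathcal{K}}<(1-\beta)/(1+\alpha-\beta)$, whereas the paper expands $1+z s_n''(z;f)/s_n'(z;f)$ through an exact identity and bounds $|z f''/f'|$ as a ratio, arriving at the identical final expression without that step.
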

\begin{proof} 
Let $ f\in \mathcal{G}(\alpha)$. Using Lemma \ref{lemma2.1}, we obtain 
\[\Re\left(1+\dfrac{z  s_2''(z; f)}{ s_2'(z; f)}\right)\geq  1- \dfrac{2|a_2| |z|}{1-2|a_2| |z|} \geq  1- \dfrac{\alpha|z|}{1-\alpha|z|}= \dfrac{1-2\alpha \rho}{1-\alpha \rho} >\beta,\]
if
\[ \rho< \dfrac{1-\beta}{\alpha(2-\beta)}:=\psi_1(\alpha,\beta).\]
Note that
\[\dfrac{\partial }{\partial \beta}\psi_1(\alpha,\beta)= -\,\dfrac{1}{\alpha(2-\beta)^2}<0.\]
Therefore
\begin{equation*}
\rho<\psi_1(\alpha,\beta)\le\psi_1(\alpha,0)=\dfrac{1}{2\alpha}:=\rho_1.
\end{equation*}
Hence, $\rho_1<1$ if $\alpha \in(1/2,1]$ and $\rho_1\ge1$ if $\alpha \in(0,1/2]$. Thus, $ s_2(z; f)$ is convex of order $\beta$ in the disk $|z|<\rho_1$ if $\alpha \in(1/2,1]$ and $ s_2(z; f)$ is convex of order $\beta$ in whole $\mathbb{D}$ if $\alpha \in(0,1/2]$. Further, using Lemma \ref{lemma2.1}, we obtain 
\begin{align*}
\Re\left(1+\dfrac{z  s_3''(z; f)}{ s_3'(z; f)}\right)& =  \Re\left(\dfrac{1+4a_2z+9a_3z^2}{1+2a_2z+3a_3z^2}\right) = 2-\Re\left(\dfrac{1-3a_3z^2}{1+2a_2z+3a_3z^2}\right) \\
 &\geq  2-\left|\dfrac{1-3a_3z^2}{1+2a_2z+3a_3z^2}\right|  \geq 2- \dfrac{1+3|a_3||z|^2}{1-2|a_2||z|-3|a_3||z|^2}\\
&\geq  2- \dfrac{2+\alpha|z|^2}{2-2\alpha|z|-\alpha|z|^2} = \dfrac{2-4\alpha \rho-3\alpha \rho^2}{2-2\alpha \rho-\alpha \rho^2}> \beta,
\end{align*}
if
\[\rho< \dfrac{\sqrt{(2-\beta)^2+2\alpha(1-\beta)(3-\beta)}-\alpha(2-\beta)}{\alpha(3-\beta)}:=\psi_2(\alpha,\beta).\]

\begin{figure}[htbp]
\centering
\includegraphics[scale=0.7]{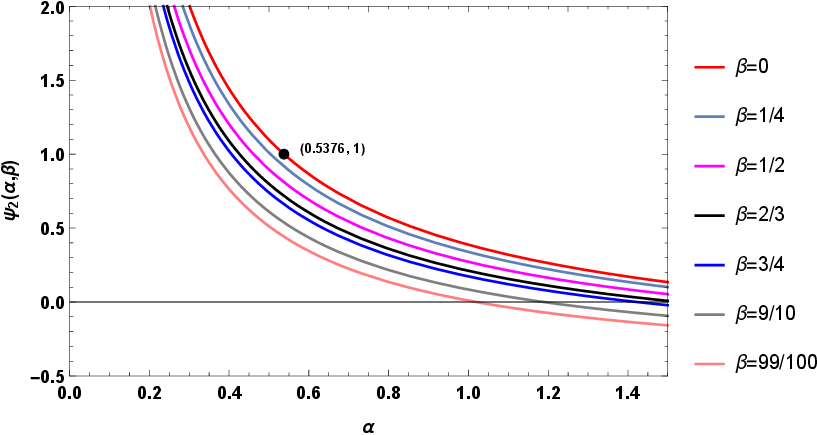}
  \caption{Graph between $\psi_2(\alpha,\beta)$ and $\alpha$, for different values of $\beta\in[0,1)$.}
  \label{gf1}
\end{figure}
Fig. \ref{gf1} shows the graphs of $\psi_2(\alpha,\beta)$ for different values of $\beta$, which shows that $\psi_2(\alpha,\beta)$ is a decreasing function of $\beta$. Hence 
\begin{equation*}
\psi_2(\alpha,\beta)\le\psi_2(\alpha,0)=\dfrac{\sqrt{4+6\alpha}-2\alpha}{3\alpha}:=\rho_2<1,
\end{equation*}
if $\alpha \in(0.5376,1]$. Note that $\rho_2\ge1$ if $\alpha \in(0,0.5376]$. Thus, $ s_3(z; f)$ is convex of order $\beta$ in the disk $|z|<\rho_2$ if $\alpha \in(0.5376,1]$, and $ s_3(z; f)$ is convex of order $\beta$ in whole $\mathbb{D}$ if $\alpha \in(0,0.5376]$.   

Next, we consider the general case for all $n \ge 2$. To prove that $ s_n(z; f)$ is convex of order $\beta$ in a disk $|z| < \rho_{\mathcal{K}}$, it is sufficient to prove that for $|z| < \rho_{\mathcal{K}},$ $ s_n'(z; f) \neq 0$ and
\begin{align}\label{covm}
\Re\left(1+\dfrac{z s_n''(z; f)}{ s_n'(z; f)} \right)& = \Re\left(1+\dfrac{z  f''(z)}{ f'(z)}+\dfrac{\dfrac{z  f''(z)}{ f'(z)} \sigma'_n(z;  f) - z \sigma''_n(z;  f)}{ f'(z)-\sigma'_n(z;  f)} \right) \\
&\ge 1 -\left| \dfrac{z  f''(z)}{ f'(z)}\right|- \dfrac{\left|\dfrac{z  f''(z)}{ f'(z)}\right| \left|\sigma'_n(z;  f)\right|+\left| z \sigma''_n(z;  f)\right|}{\left| f'(z)\right|-\left|\sigma'_n(z;  f)\right|}>\beta, \notag
\end{align}
where $\sigma_n(z;  f)= \sum ^{\infty}_{k=n+1} \alpha_k z^k$. Using \ref{lemma2.2d}, we have
\begin{align}\label{inq-convex}
\left| \dfrac{z  f''(z)}{ f'(z)}\right| \leq \dfrac{\alpha\,\rho\,(1+\rho)}{1-\rho^2}=\dfrac{\alpha\,\rho}{1-\rho}.
\end{align}
Further, using \eqref{lemma2.2b} and \eqref{2.6} for $|z|=\rho$, we obtain
\begin{equation}\label{ez}
| f'(z)|-|\sigma'_n(z;  f)| \geq (1-\rho)^{\alpha}+\alpha(\ln(1-\rho)+\rho):=\psi_3(\alpha,\rho). 
\end{equation}
Since $\dfrac{\partial \psi_3(\alpha,\rho)}{\partial \alpha}<0$ for all $\alpha \in (0,1]$, therefore
\begin{equation*}
\psi_3(\alpha,\rho)\ge \psi_3(1,\rho)=1+\ln(1-\rho),
\end{equation*}
is positive when $|z|<\rho_3,$ where $\rho_3\approx 0.6321$. 
Thus,
\begin{equation}\label{k1}
    |\sigma'_n(z;  f)| < | f'(z)|, \quad |z| < \rho_3.
\end{equation}

Note that $\dfrac{\sigma'_n(z;  f)}{ f'(z)}$ is analytic in $\mathbb{D}$ since $ f'(z)\neq 0$. Hence, if \eqref{k1} holds for $|z|=\rho_{\mathcal{K}},$ then the same inequality holds for $0<|z|<\rho_{\mathcal{K}}$ when $ s_n'(z; f) \neq 0$ for $0<|z|<\rho_{\mathcal{K}}$. Consequently, $1+\dfrac{z s_n''(z; f)}{ s_n'(z; f)}$ is analytic on $|z|\leq \rho_{\mathcal{K}}$, and if $\Re\left(1+\dfrac{z s_n''(z; f)}{ s_n'(z; f)}\right) \ge \beta$ on $|z|=\rho_{\mathcal{K}}$, then $ s_n(z; f)$ is convex of order $\beta$ in $|z|<\rho_{\mathcal{K}}$.

Now using inequalities \eqref{2.6}, \eqref{2.7}, \eqref{inq-convex}, \eqref{ez} in \eqref{covm}, we obtain
\begin{align} \label{cg(a)re}
\Re\left(1+\dfrac{z s_n''(z; f)}{ s_n'(z; f)} \right) &\ge 1- \dfrac{\alpha \rho}{1-\rho}+\dfrac{\dfrac{\alpha \rho}{1-\rho}\; \alpha(\ln(1-\rho)+\rho)-\dfrac{\alpha \rho^2}{1-\rho}}{(1-\rho)^{\alpha}+\alpha(\ln(1-\rho)+\rho)} \\
 & \ge 1- \dfrac{\alpha \rho}{1-\rho}+\dfrac{ \alpha^2 \rho\ln(1-\rho)-\alpha \rho^2(1-\alpha)}{(1-\rho)\left((1-\rho)^{\alpha}+\alpha(\ln(1-\rho)+\rho)\right)} \notag \\
&= \dfrac{(1-\rho)^{\alpha}(1-\rho(1+\alpha))+\alpha \rho(1-2\rho)+\alpha(1-\rho)\ln(1-\rho)}{(1-\rho)\left((1-\rho)^{\alpha}+\alpha(\ln(1-\rho)+\rho)\right)}\notag \\
&:=\psi_4(\alpha,\rho). \notag
\end{align}
Note that
\[\dfrac{\partial}{\partial \alpha}\psi_4(\alpha,\rho)=\dfrac{\rho(1-\rho)^{\alpha-1}\left( \alpha \ln(1-\rho)((1-\alpha)\rho-\alpha\ln(1-\rho))-(1-\rho)^{\alpha} \right)}{(\alpha(\ln(1-\rho)+\rho)+(1-\rho)^{\alpha})^2}<0.\]
Thus
\[\psi_4(\alpha,\rho)\ge \psi_4(1,\rho)=\dfrac{1-2\rho+(1-\rho)\ln(1-\rho)}{(1-\rho)(1+\ln(1-\rho))}>0,\]
for $|z|<\rho_4$, where $\rho_4 \approx 0.3578$. Since,
\[\min \{ \rho_1, \rho_2, \rho_3, \rho_4\}=\rho_4=\rho_{\mathcal{K}},\] 
therefore
\[\Re\left(1+\dfrac{z s_n''(z; f)}{ s_n'(z; f)} \right)\ge \psi_4(\alpha,\rho) > \beta,\quad |z|<\rho_{\mathcal{K}}. \]
That is, $ s_n(z; f)$ is convex of order $\beta$ in the disk $|z|<\rho_{\mathcal{K}}$ for all $n\ge 2$, where $\rho_{\mathcal{K}}$ is the least positive root of $I_{\alpha,\beta}(\rho)= 0$ in $[0,1)$ for all $\alpha \in(0,1]$ and  $\beta \in[0,1)$, where $I_{\alpha,\beta}(\rho)$ is given by \eqref{econ}. This completes the proof.
\end{proof}

Using Theorem \ref{Th2.0}, we yield the following result:
\begin{corollary}\label{c1}
Let $ f(z)=z+\sum_{k=2}^{\infty} a_k z^k \in \mathcal{G}$. Then $ s_n(z; f)$ is convex in $|z|<\rho_{\mathcal{K}_1}\approx 0.3578$ for all $n\ge 2$, where $\rho_{\mathcal{K}_1}$ is the unique root of $I(\rho)=0$ in $[0,1)$, where $I(\rho)$ is given by
\[I(\rho)=1-2\rho+(1-\rho)\ln(1-\rho).\]
\end{corollary}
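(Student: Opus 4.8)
The plan is to read this off Theorem \ref{Th2.0} by specializing to $\alpha=1$ and $\beta=0$, since $\mathcal{G}\equiv\mathcal{G}(1)$ and ``convex'' means ``convex of order $\beta=0$''. The only content beyond a substitution is verifying that the defining equation \eqref{econ} collapses to $I(\rho)=0$ and that this equation has a \emph{unique} root in $[0,1)$ (Theorem \ref{Th2.0} only supplies the least positive root), so I would organize the argument around those two points.

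First I would substitute $\alpha=1$, $\beta=0$ into \eqref{econ}. Factoring the two algebraic terms gives
\[I_{1,0}(\rho)=(2\rho-1)\big((1-\rho)+\rho\big)-(1-\rho)\ln(1-\rho)=2\rho-1-(1-\rho)\ln(1-\rho)=-I(\rho),\]
so the equation $I_{\alpha,\beta}(\rho)=0$ of Theorem \ref{Th2.0} is exactly $I(\rho)=0$, and their roots in $[0,1)$ coincide. To confirm that this root is the convexity radius, I would recall from the proof of Theorem \ref{Th2.0} that at $\alpha=1$, $\beta=0$ the operative lower bound is
\[\Re\!\left(1+\frac{z s_n''(z;f)}{s_n'(z;f)}\right)\ge \psi_4(1,\rho)=\frac{1-2\rho+(1-\rho)\ln(1-\rho)}{(1-\rho)\big(1+\ln(1-\rho)\big)}=\frac{I(\rho)}{(1-\rho)\big(1+\ln(1-\rho)\big)}.\]
The denominator is positive precisely for $\rho<\rho_3=1-e^{-1}\approx0.6321$, so throughout that range the inequality $\psi_4(1,\rho)>0$ is equivalent to $I(\rho)>0$; thus the sign of the bound is controlled by $I$ up to and a little past the radius we are after.

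It then remains to establish uniqueness and to place the root below $\rho_3$. Here I would differentiate: $I'(\rho)=-3-\ln(1-\rho)$, which is negative on $[0,1-e^{-3})$ and positive on $(1-e^{-3},1)$. Hence $I$ decreases strictly from $I(0)=1>0$, reaches its minimum at $\rho=1-e^{-3}\approx0.9502$, and then increases; since $I(1-e^{-3})<0$ and $\lim_{\rho\to1^-}I(\rho)=-1<0$, after its single descent across zero the function never returns to the positive axis. Consequently $I$ has exactly one zero $\rho_{\mathcal{K}_1}\in(0,1)$, a numerical evaluation locates it at $\rho_{\mathcal{K}_1}\approx0.3578<\rho_3$, and $I(\rho)>0$ on $[0,\rho_{\mathcal{K}_1})$, so $\psi_4(1,\rho)>0$ and $s_n(z;f)$ is convex there for every $n\ge2$.

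No step is a serious obstacle; the specialization and factoring are routine and the rest is a one-variable sign analysis. The only points demanding care are keeping the two sign conventions straight (the corollary's $I$ equals $-I_{1,0}$), and confirming $\rho_{\mathcal{K}_1}<\rho_3$ so that the equivalence ``$\psi_4(1,\rho)>0\Leftrightarrow I(\rho)>0$'' remains valid — and hence the denominator of $\psi_4$ stays positive — throughout the target disk.
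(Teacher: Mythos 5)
Your proposal is correct and follows the same route as the paper, which obtains the corollary purely by specializing Theorem \ref{Th2.0} to $\alpha=1$, $\beta=0$ (the paper supplies no further argument). Your two additions are also correct and fill in details the paper leaves implicit: the check that the corollary's $I(\rho)$ equals $-I_{1,0}(\rho)$ (so the zero sets coincide despite the sign-convention mismatch), and the monotonicity analysis via $I'(\rho)=-3-\ln(1-\rho)$, which justifies the word ``unique'' in the statement and confirms $\rho_{\mathcal{K}_1}\approx 0.3578 < 1-e^{-1}$ so the denominator of $\psi_4(1,\rho)$ stays positive on the whole disk.
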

\begin{remark}
    The Corollary \ref{c1} improved the result of Maharana et~al. \cite[Theorem 1]{mah}, which states that every section $ s_n(z; f)$ of $ f(z) \in \mathcal{G}$ is convex in the disk $|z|<1/2$.
\end{remark}

 \medskip
\begin{theorem}\label{Th2.1}
Let $ f(z)=z+\sum_{k=2}^{\infty} a_k z^k \in \mathcal{G}(\alpha)$. Then $ s_n(z; f)$ is starlike of order $\beta$ in the disk $|z|<\rho_{\mathcal{S}^*}$ for all $n\ge 2$, where $\rho_{\mathcal{S}^*}$ is the least positive root of $J_{\alpha, \beta}(\rho)=0$ in $[0,1)$ for all $\alpha \in(0,1]$ and  $\beta \in[0,1)$, where $J_{\alpha,\beta}(\rho)$ is given by
\begin{align}\label{2.1}
J_{\alpha, \beta}(\rho)&=(1-(1-\rho)^{1+\alpha})((1+\alpha)(1-\beta-\rho)+\beta \rho)\\
&+\alpha(1+\alpha)(1+\alpha-\rho)\left(\left( (3-\beta)\rho-(2-\beta)\right) \ln(1-\rho)-(2-\beta)\rho+\left(2-\dfrac{\beta}{2} \right)\rho^2\right). \notag
\end{align}
\end{theorem}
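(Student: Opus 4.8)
The plan is to follow the same architecture as the proof of Theorem \ref{Th2.0}, replacing the convexity quotient $1+zs_n''/s_n'$ by the starlikeness quotient $zs_n'(z;f)/s_n(z;f)$. First I would dispose of the low-order sections $s_2(z;f)$ and $s_3(z;f)$ by direct computation: writing $zs_2'(z;f)/s_2(z;f)=2-1/(1+a_2z)$ and the analogous rational expression for $s_3$, and inserting the sharp coefficient bounds $|a_k|\le\alpha/(k(k-1))$ from Lemma \ref{lemma2.1}, one obtains explicit threshold radii (the starlike analogues of the $\psi_1,\psi_2$ of Theorem \ref{Th2.0}) below which $\Re(zs_n'/s_n)>\beta$; these exceed the general-$n$ radius and so will not be binding.

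For general $n\ge 2$ I would begin from the identity
\[\frac{z s_n'(z;f)}{s_n(z;f)}=\frac{z f'(z)}{f(z)}+\frac{\dfrac{z f'(z)}{f(z)}\,\sigma_n(z;f)-z\,\sigma_n'(z;f)}{f(z)-\sigma_n(z;f)},\]
which comes from $s_n=f-\sigma_n$, $s_n'=f'-\sigma_n'$ with $\sigma_n(z;f)=\sum_{k=n+1}^{\infty}a_kz^k$, and then estimate
\[\Re\left(\frac{z s_n'(z;f)}{s_n(z;f)}\right)\ge \Re\left(\frac{z f'(z)}{f(z)}\right)-\frac{\left|\dfrac{z f'(z)}{f(z)}\right|\,|\sigma_n(z;f)|+|z|\,|\sigma_n'(z;f)|}{|f(z)|-|\sigma_n(z;f)|}.\]
The four ingredients are: (i) from the subordination \eqref{lemma2.2a} the Möbius image of $\overline{\mathbb D_\rho}$ is a disk whose real-diameter endpoints give $\left|z f'/f-1\right|\le \alpha\rho/(1+\alpha-\rho)$, hence simultaneously $\Re(z f'/f)\ge (1+\alpha)(1-\rho)/(1+\alpha-\rho)$ and $|z f'/f|\le 1+\alpha\rho/(1+\alpha-\rho)$; (ii) from \eqref{lemma2.2c} the growth estimate $|f(z)|\ge (1-(1-\rho)^{1+\alpha})/(1+\alpha)$; and (iii)--(iv) the section-tail bounds \eqref{2.4} and \eqref{2.6}.

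Substituting these into the displayed inequality yields an explicit lower bound $\psi_{\mathcal S^*}(\alpha,\rho)$ for $\Re(zs_n'/s_n)$, and a direct (if tedious) simplification shows that $\psi_{\mathcal S^*}(\alpha,\rho)=\beta$ is equivalent, after clearing the positive factor $(1+\alpha)(1+\alpha-\rho)(|f|-|\sigma_n|)$, to $J_{\alpha,\beta}(\rho)=0$; here the two consequences of (i) combine so that the numerator collapses to the clean expression $(2-\beta)$ times the tail sum, producing exactly the second bracket of \eqref{2.1}. As in Theorem \ref{Th2.0} I would verify $\partial_\alpha\psi_{\mathcal S^*}<0$, reducing the worst case to $\alpha=1$, and check that the denominator $|f|-|\sigma_n|$ stays positive on the relevant interval (at $\alpha=1$ it reduces to $-(1-\rho)\ln(1-\rho)>0$). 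This positivity also guarantees $s_n(z;f)\neq 0$ for $0<|z|<\rho_{\mathcal S^*}$, so the starlikeness-of-order-$\beta$ criterion genuinely applies and $\rho_{\mathcal S^*}$ is the least positive root of $J_{\alpha,\beta}$.

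The main obstacle is ingredient (ii): extracting the lower bound on $|f|$ from \eqref{lemma2.2c}. Since the subordinating function $\phi(z)=(1-(1-z)^{1+\alpha})/(z(1+\alpha))$ is not a Möbius map, its minimal modulus cannot be read off from a circular image. I would argue that $\phi$ is zero-free on $\overline{\mathbb D_\rho}$, invoke the minimum-modulus principle to push the minimum of $|\phi|$ to $|z|=\rho$, and then show this boundary minimum is attained at $z=\rho$, so that $|f(z)|=|z|\,|\phi(w(z))|\ge\rho\,\phi(\rho)=(1-(1-\rho)^{1+\alpha})/(1+\alpha)$ for the Schwarz function $w$. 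Establishing the monotonicity $\partial_\alpha\psi_{\mathcal S^*}<0$ (and the corresponding monotonicity in $\beta$ for the $s_2,s_3$ thresholds) is the remaining, purely computational, hurdle.
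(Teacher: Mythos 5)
Your proposal is correct and is essentially the paper's own proof: the same decomposition $s_n(z;f)=f(z)-\sigma_n(z;f)$ with the same lower bound on $\Re\left(zs_n'(z;f)/s_n(z;f)\right)$, the same preliminary treatment of $s_2$ and $s_3$, the same four estimates (the two consequences of \eqref{lemma2.2a}, the growth bound $|f(z)|\ge (1-(1-\rho)^{1+\alpha})/(1+\alpha)$, and the tail bounds \eqref{2.4} and \eqref{2.6}), the same positivity check on $|f|-|\sigma_n|$, the same reduction to the worst case $\alpha=1$ via $\partial_\alpha<0$, and the same clearing of denominators that identifies the resulting inequality with $J_{\alpha,\beta}(\rho)\ge 0$ (your observation that the coefficient of the tail sum collapses to $2-\beta$ is exactly what happens in the paper's simplification). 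The only divergence is your ingredient (ii): the paper obtains the lower bound on $|f|$ from \eqref{lemma2.2b} by integrating $|f'|\ge(1-t)^{\alpha}$, rather than from \eqref{lemma2.2c}, but your route is also sound, since the subordinating function $\left(1-(1-w)^{1+\alpha}\right)/\left((1+\alpha)w\right)$ equals $1-\sum_{k\ge1}b_k w^k$ with all $b_k\ge 0$ for $\alpha\in(0,1]$ and $\sum_{k\ge1}b_k=\alpha/(1+\alpha)<1$, so its minimum modulus over $|w|\le\rho$ is attained at $w=\rho$, which is precisely the boundary claim your minimum-modulus argument requires.
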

\begin{proof} 
Let $ f\in \mathcal{G}(\alpha)$. Using Lemma \ref{lemma2.1}, we obtain 
\[\Re\left(\dfrac{z s_2'(z; f)}{ s_2(z; f)}\right)=\Re\left(\dfrac{z+2 a_2 z^2}{z+a_2 z^2}\right)\geq 1- \dfrac{|a_2||z|}{1-|a_2| |z|} \geq  \dfrac{2(1- \alpha \rho)}{2- \alpha \rho}>\beta,\]
if
\[\rho< \dfrac{2(1-\beta)}{\alpha(2-\beta)}:=\psi_5(\alpha,\beta).\]
Note that
\[\dfrac{\partial }{\partial \beta}\psi_5(\alpha,\beta)=-\,\dfrac{2}{\alpha(2-\beta)^2}<0.\]
Therefore
\begin{equation*}
\rho<\psi_5(\alpha,\beta)\le\psi_5(\alpha,0)=\dfrac{1}{\alpha}:=\rho_5.
\end{equation*}
Note that $\rho_5\ge 1$ for all $\alpha \in(0,1]$. Thus, $ s_2(z; f)$ is startlike of order $\beta$ in $\mathbb D$ for all $\alpha \in(0,1]$. Further using Lemma \ref{lemma2.1}, we obtain 
\begin{align*}
\Re\left(\dfrac{z  s_3'(z; f)}{ s_3(z; f)}\right)=\Re\left(\dfrac{z+2a_2 z^2 + 3a_3 z^3}{z+a_2 z^2+a_3 z^3}\right)
\geq & 1- \dfrac{|a_2||z|+2|a_3||z|^2}{1-|a_2||z|-|a_3||z|^2}\\
\geq & \dfrac{6-6\alpha \rho-3\alpha \rho^2}{6-3\alpha \rho-\alpha \rho^2} > \beta,
\end{align*}
if
\[\rho< \dfrac{\sqrt{9(2-\beta)^2\alpha^2+24\alpha (1-\beta)(3-\beta)}-3(2-\beta)\alpha}{2 \alpha(3-\beta)}:=\psi_6(\alpha,\beta).\]

\begin{figure}[htbp]
\centering
\includegraphics[scale=0.7]{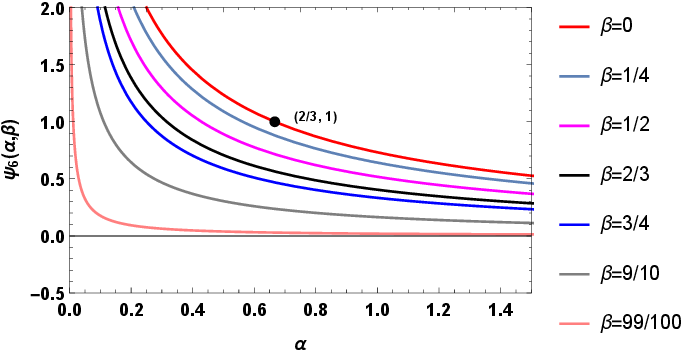}
  \caption{Graph between $\psi_6(\alpha,\beta)$ and $\alpha$, for different values of $\beta\in[0,1)$.}
  \label{gf2}
\end{figure}
Fig. \ref{gf2} shows the graphs of $\psi_6(\alpha,\beta)$ for different values of $\beta$, which shows that $\psi_6(\alpha,\beta)$ is a decreasing function of $\beta$, therefore
\begin{equation*}
\psi_6(\alpha,\beta)\le\psi_6(\alpha,0)=\sqrt{1+\dfrac{2}{\alpha}}-1:=\rho_6<1,
\end{equation*}
if $\alpha \in(2/3,1]$ and $\rho_6\ge 1$ if $\alpha \in(0,2/3]$. Thus, $ s_3(z; f)$ is starlike of order $\beta$ in the disk $|z|<\rho_6$ if $\alpha \in(2/3,1]$, and $ s_3(z; f)$ is starlike of order $\beta$ in whole $\mathbb D$ if $\alpha \in(0,2/3]$.

Next, we consider the general case for $n \ge 2$. To prove that $ s_n(z; f)$ is starlike of order $\beta$ in $|z|< \rho_{\mathcal{S}^*}$, it is sufficient to prove that for $0 < |z|< \rho_{\mathcal{S}^*}$, $ s_n(z; f)\neq 0$ and
\begin{align}\label{2.2}
\displaystyle{\Re\left(\dfrac{z s_n'(z; f)}{ s_n(z; f)}\right)}=& \; \Re\left(\dfrac{z  f'(z)}{ f(z)}+\dfrac{\dfrac{z  f'(z)}{ f(z)}\sigma_n(z; f)-z\sigma'_n(z; f)}{ f(z)-\sigma_n(z; f)}\right) \\
>& \; \Re\left(\dfrac{z  f'(z)}{ f(z)} \right)- \dfrac{\displaystyle{\Big|\dfrac{z  f'(z)}{ f(z)}\Big| |\sigma_n(z; f)|+|z\sigma'_n(z; f)|}}{| f(z)|-|\sigma_n(z; f)|}>\beta, \notag
\end{align}
where $ \sigma_n(z; f)= \sum ^{\infty}_{k=n+1} a_k z^k$.
From \eqref{lemma2.2a}, we get
\begin{equation} \label{2.8}
     \dfrac{z  f'(z)}{ f(z)}=\dfrac{(1+\alpha)(1-w(z))}{1+\alpha-w(z)}=1-\dfrac{\alpha\, w(z)}{1+\alpha-w(z)}.
\end{equation}
Now, for $|z|=\rho<1$, we obtain
\begin{align}\label{2.9}
\Re \left( \dfrac{z  f'(z)}{ f(z)} \right)\ge 1-\bigg| \dfrac{\alpha\, w(z)}{1+\alpha-w(z)} \bigg| \ge \dfrac{(1+\alpha)(1-\rho)}{1+\alpha-\rho},
\end{align}
and
\begin{equation}\label{2.10}
\bigg| \dfrac{z  f'(z)}{ f(z)} \bigg| \leq 1+\dfrac{\alpha\, |w(z)|}{1+\alpha-|w(z)|} \leq \dfrac{1+\alpha -(1-\alpha) \rho}{1+\alpha-\rho}.
\end{equation}
Further using \eqref{lemma2.2b}, we obtain
\begin{equation}\label{2.11}
| f(z)|=\int_{|z|=\rho}| f'(\zeta)||d\zeta|\ge \dfrac{1-(1-\rho)^{1+\alpha}}{1+\alpha}.
\end{equation}
To show that $ s_n(z; f)$ is starlike of order $\beta$, it is sufficient to show that
\begin{equation}\label{2.11.1}
    |\sigma_n(z;  f)| < | f(z)|, \quad |z| < \rho_{\mathcal{S}^*},
\end{equation}
and the inequality \eqref{2.2} holds for $|z| < \rho_{\mathcal{S}^*}$.

In fact, $\dfrac{\sigma_n(z;  f)}{ f(z)}$ is analytic in $\mathbb{D}$. Hence, if \eqref{2.11.1} holds for $|z|=\rho_{\mathcal{S}^*},$ then the same inequality holds for $0<|z|<\rho_{\mathcal{S}^*}$ when $ s_n(z; f) \neq 0$ for $0<|z|<\rho_{\mathcal{S}^*}$. Consequently $\dfrac{z s_n'(z; f)}{ s_n(z; f)}$ is analytic on $|z|\leq \rho_{\mathcal{S}^*}$ and if $\Re\left(\dfrac{z s_n'(z; f)}{ s_n(z; f)}\right) \geq 0$ on $|z|=\rho_{\mathcal{S}^*}$, then $ s_n(z; f)$ is starlike in $|z|<\rho_{\mathcal{S}^*}$. But the inequality $\Re\left(\dfrac{z s_n'(z; f)}{ s_n(z; f)}\right) \geq 0$ on $|z|=\rho_{\mathcal{S}^*}$ follows from \eqref{2.2}. Using \eqref{2.11} and \eqref{2.4}, we obtain
\[
| f(z)|-|\sigma_n(z;  f)| >  \dfrac{1-(1-\rho)^{1+\alpha}}{1+\alpha}-\alpha\left((1-\rho)\ln(1-\rho)+\rho-\dfrac{\rho^2}{2}\right):=\psi_7(\alpha,\rho). 
\]
Since $\dfrac{\partial }{\partial \alpha}\psi_7(\alpha,\rho)<0$ for all $\alpha \in (0,1]$, therefore
\[\psi_7(\alpha,\rho)\ge \psi_7(1,\rho)=-(1-\rho)\ln(1-\rho):=\rho_7,\]
which is positive when $|z|<\rho_7,$ where $\rho_7=1$. Hence \eqref{2.11.1} holds when $|z|<\rho_7$, that is in whole $\mathbb{D}$.

By using the inequalities \eqref{2.4},\eqref{2.6}, \eqref{2.9}, \eqref{2.10}, and \eqref{2.11.1}, we obtain 
\begin{align}\label{sg(a)re}
& \Re\left(\dfrac{z s_n'(z; f)}{ s_n(z; f)}\right)  \\
\ge &\; \dfrac{(1+\alpha)(1-\rho)}{1+\alpha-\rho} - \dfrac{\dfrac{\alpha(1+\alpha-(1-\alpha)\rho)}{1+\alpha-\rho}\left((1-\rho)\ln(1-\rho)+\rho-\dfrac{\rho^2}{2}\right)-\alpha \, \rho(\ln(1-\rho)+\rho)}{\dfrac{1-(1-\rho)^{1+\alpha}}{1+\alpha}-\alpha\left((1-\rho)\ln(1-\rho)+\rho-\dfrac{\rho^2}{2}\right)} \notag \\
= &\; \dfrac{(1-\rho)(1-(1-\rho)^{1+\alpha})+\alpha(1+\alpha-\rho)\left((3\rho-2)\ln(1-\rho)-2\rho+2\rho^2\right)}{(1+\alpha-\rho)\left( \dfrac{1-(1-\rho)^{1+\alpha}}{1+\alpha}-\alpha\left((1-\rho)\ln(1-\rho)+\rho-\dfrac{\rho^2}{2}\right)\right)} :=\psi_8(\alpha,\rho). \notag
\end{align}
Note that,
\begin{align*}
\dfrac{\partial}{\partial \alpha}\psi_8(\alpha,\rho)= \dfrac{\eta_0(\alpha,\rho)\,\eta_1(\alpha,\rho)\,\eta_2(\alpha,\rho)-\left(\eta_0(\alpha,\rho)\,\eta_3(\alpha,\rho)+\eta_1(\alpha,\rho)\right)\eta_4(\alpha,\rho)}{\eta_0^2(\alpha,\rho)\,\eta_1^2(\alpha,\rho)}<0,
\end{align*}
in the interval $(0,1)$ where,
\begin{align*}
\eta_0(\alpha,\rho)=& \, (1+\alpha-\rho) ,\\
\eta_1(\alpha,\rho)=& \, \dfrac{1-(1-\rho)^{1+\alpha}}{1+\alpha}-\alpha\left((1-\rho)\ln(1-\rho)+\rho-\dfrac{\rho^2}{2}\right) ,\\
\eta_2(\alpha,\rho)=&  -2(1+2\alpha-\rho)(1-\rho)\rho \\
& \; \quad -\left(\alpha(4-6\rho)+(1-\rho)\left(2+(1-\rho)^{\alpha}-(3+(1-\rho)^{\alpha})\rho \right) \right)\ln(1-\rho) , \\
\eta_3(\alpha,\rho)=&  -\dfrac{1-(1-\rho)^{1+\alpha}}{(1+\alpha)^2}-\rho+\dfrac{\rho^2}{2}- \dfrac{(1-\rho)^{1+\alpha}\ln(1-\rho)}{1+\alpha} -(1-\rho)\ln(1-\rho) ,\quad \text{and}\\
\eta_4(\alpha,\rho)=& \,(1-(1-\rho)^{1+\alpha})(1-\rho)-\alpha(1+\alpha-\rho)\left(2(1-\rho)\rho+(2-3\rho)\ln(1-\rho)\right).
\end{align*} 
Therefore
\[\psi_8(\alpha,\rho)\ge \psi_8(1,\rho)=\dfrac{\rho-\rho^2+(2-3\rho)\ln(1-\rho)}{(1-\rho)\ln(1-\rho)}>0,\]
for $|z|<\rho_8$, where $\rho_8 \approx 0.5698$. Note that
\[\min \{ \rho_5, \rho_6, \rho_7, \rho_8\}=\rho_8=\rho_{\mathcal{S}^*}.\] 
Therefore
\[\Re\left(\dfrac{z s_n'(z; f)}{ s_n(z; f)} \right)\ge \psi_8(\alpha,\rho) > \beta,\quad |z|<\rho_{\mathcal{S}^*}. \]
That is, $ s_n(z; f)$ is starlike of order $\beta$ in the disk $|z|<\rho_{\mathcal{S}^*}$ for all $n\ge 2$, where $\rho_{\mathcal{S}^*}$ is the least positive root of $J_{\alpha, \beta}(\rho)=0$ in $[0,1)$ for all $\alpha \in(0,1]$ and  $\beta \in[0,1)$, where $J_{\alpha,\beta}(\rho)$ is given by \eqref{2.1}. This completes the proof.
\end{proof}

Now, using Theorem \ref{Th2.1}, we yield the following result:
\begin{corollary}\label{c2}
Let $ f(z)=z+\sum_{k=2}^{\infty} a_k z^k \in \mathcal{G}$. Then $ s_n(z; f)$ is starlike in $|z|<\rho_{\mathcal{S}^*_1}$ for all $n\ge 2$, where $\rho_{\mathcal{S}^*_1}\approx 0.5698$ is the unique root of $J(\rho)=0$ in $[0,1)$, where $J(\rho)$ is given by
\[J(\rho)=\rho-\rho^2+(2-3\rho)\ln(1-\rho).\]
\end{corollary}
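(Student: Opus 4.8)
The plan is to recognize that Corollary \ref{c2} is nothing but the specialization of Theorem \ref{Th2.1} to the case $\alpha=1$, $\beta=0$. Since $\mathcal{G}\equiv\mathcal{G}(1)$ and ordinary starlikeness is starlikeness of order $0$, every $f\in\mathcal{G}$ lies in $\mathcal{G}(1)$ and the assertion to be proved is exactly that $s_n(z;f)$ is starlike of order $0$. First I would substitute $\alpha=1$ and $\beta=0$ into the expression \eqref{2.1} for $J_{\alpha,\beta}(\rho)$. Using $1-(1-\rho)^2=\rho(2-\rho)$ and collecting the two summands, both of which carry the common factor $2(2-\rho)$, a short computation yields the factorization
\[
J_{1,0}(\rho)=-2(2-\rho)\bigl(\rho-\rho^2+(2-3\rho)\ln(1-\rho)\bigr)=-2(2-\rho)\,J(\rho).
\]
Because $2-\rho>0$ throughout $[0,1)$, the positive zeros of $J_{1,0}$ in $[0,1)$ coincide with those of $J$; hence the radius $\rho_{\mathcal{S}^*}$ furnished by Theorem \ref{Th2.1} reduces to the least positive root $\rho_{\mathcal{S}^*_1}$ of $J(\rho)=0$.

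The second task is to upgrade the phrase ``least positive root'' to ``unique root,'' which is the only genuinely new content. I would analyze $J(\rho)=\rho-\rho^2+(2-3\rho)\ln(1-\rho)$ on $[0,1)$ directly. A Taylor expansion at the origin gives $J(0)=0$ and $J(\rho)=-\rho+O(\rho^2)$, so $J$ is negative just to the right of $0$; and since $2-3\rho\to-1$ while $\ln(1-\rho)\to-\infty$ as $\rho\to1^-$, we get $J(\rho)\to+\infty$, so the intermediate value theorem forces at least one zero in $(0,1)$. For uniqueness I would differentiate twice: using $\tfrac{2-3\rho}{1-\rho}=3-\tfrac{1}{1-\rho}$ one finds
\[
J''(\rho)=-2+\frac{3}{1-\rho}+\frac{1}{(1-\rho)^2}\ge -2+3+1=2>0
\]
for all $\rho\in[0,1)$, so $J$ is strictly convex. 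Combined with $J'(0)=-1<0$, strict convexity shows that $J$ decreases from $J(0)=0$ to a single interior minimum and thereafter increases monotonically to $+\infty$; consequently $J$ has exactly one zero in $(0,1)$. Solving $J(\rho)=0$ numerically locates it at $\rho_{\mathcal{S}^*_1}\approx 0.5698$, matching the value $\rho_8$ fixed in the proof of Theorem \ref{Th2.1}.

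I do not anticipate a serious obstacle: the bulk of the labour is the routine (if slightly tedious) algebra confirming $J_{1,0}(\rho)=-2(2-\rho)J(\rho)$, and the only conceptual step is the convexity argument for uniqueness. One point worth verifying carefully is that, among the auxiliary radii $\rho_5,\rho_6,\rho_7,\rho_8$ appearing in Theorem \ref{Th2.1}, the minimum at $\alpha=1$ is indeed $\rho_8$: substituting $\alpha=1$ gives $\rho_5=1$, $\rho_6=\sqrt{3}-1\approx0.732$, $\rho_7=1$, and $\rho_8\approx0.5698$, so $\rho_{\mathcal{S}^*_1}=\rho_8$ as claimed, and it is precisely the smallness of $\rho_8$ that makes the starlikeness radius governed by $J$.
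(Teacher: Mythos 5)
Your proposal is correct and takes essentially the same route as the paper, which derives Corollary \ref{c2} simply by specializing Theorem \ref{Th2.1} to $\alpha=1$, $\beta=0$ (the function $\psi_8(1,\rho)=J(\rho)/\bigl((1-\rho)\ln(1-\rho)\bigr)$ and the radius $\rho_8\approx 0.5698$ already appear in that theorem's proof). Your explicit factorization $J_{1,0}(\rho)=-2(2-\rho)J(\rho)$ and the convexity argument $J''(\rho)=-2+\tfrac{3}{1-\rho}+\tfrac{1}{(1-\rho)^2}\ge 2>0$ establishing uniqueness of the root in $(0,1)$ are correct details that the paper leaves unstated.
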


 \medskip
\begin{theorem}\label{Th2.2}
Let $ f(z)=z+\sum_{k=2}^{\infty} a_k z^k \in \mathcal{G}(\alpha)$. Then $ s_n(z; f)$ is close-to-convex of order $\beta$ in the disk $|z|<\rho_{\mathcal{C}}$ for all $n\ge 2$, where $\rho_{\mathcal{C}}$ is the least positive root of $K_{\alpha, \beta}(\rho)= 0$ in $[0,1)$ for all $\alpha \in(0,1]$ and  $\beta \in[0,1)$, where $K_{\alpha,\beta}(\rho)$ is given by
\begin{align}\label{2.12}
K_{\alpha, \beta}(\rho)=(1-\rho)^{\alpha}+\alpha(\ln(1-\rho)+\rho)-\beta.
\end{align}
\end{theorem}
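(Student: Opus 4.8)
The plan is to reduce everything to the elementary sufficient condition for close-to-convexity of order $\beta$. Taking the convex reference function $\psi(z)=z$ (so $\psi'(z)=1$) and $\theta=0$ in the definition of $\mathcal{C}(\beta)$, it is enough to prove that $\Re\, s_n'(z;f)>\beta$ for $|z|<\rho_{\mathcal{C}}$; note that since $\beta\ge 0$, this inequality also forces $s_n'(z;f)\neq0$, so no separate non-vanishing argument is required. First I would write $s_n'(z;f)=f'(z)-\sigma_n'(z;f)$ with $\sigma_n(z;f)=\sum_{k=n+1}^{\infty}a_kz^k$, giving
\[\Re\, s_n'(z;f)=\Re f'(z)-\Re\sigma_n'(z;f)\ge \Re f'(z)-|\sigma_n'(z;f)|.\]

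The second term is controlled uniformly in $n$ by \eqref{2.6}, namely $|\sigma_n'(z;f)|\le-\alpha(\ln(1-\rho)+\rho)$ for all $n\ge2$ and $|z|=\rho$. For the first term I would invoke the subordination \eqref{lemma2.2b}, $f'(z)\prec(1-z)^{\alpha}$, to obtain the sharp lower bound $\Re f'(z)\ge(1-\rho)^{\alpha}$ on $|z|=\rho$, the real-part analogue of the modulus estimate already used in \eqref{ez}. Combining the two estimates yields
\[\Re\, s_n'(z;f)\ge(1-\rho)^{\alpha}+\alpha(\ln(1-\rho)+\rho)=K_{\alpha,\beta}(\rho)+\beta,\]
with $K_{\alpha,\beta}$ as in \eqref{2.12}, so the target inequality $\Re\, s_n'(z;f)>\beta$ is equivalent to $K_{\alpha,\beta}(\rho)>0$. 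It then remains to analyse the scalar function $K_{\alpha,\beta}$: one checks $K_{\alpha,\beta}(0)=1-\beta>0$, that $K_{\alpha,\beta}(\rho)\to-\infty$ as $\rho\to1^-$ (from the $\alpha\ln(1-\rho)$ term), and that
\[K_{\alpha,\beta}'(\rho)=\alpha\bigl(1-(1-\rho)^{\alpha-1}-(1-\rho)^{-1}\bigr)<0\qquad(\rho\in(0,1)),\]
since $(1-\rho)^{-1}>1$ and $(1-\rho)^{\alpha-1}\ge1$ for $\alpha\in(0,1]$. Hence $K_{\alpha,\beta}$ is strictly decreasing and has a unique (thus least positive) root $\rho_{\mathcal{C}}$ in $[0,1)$, with $K_{\alpha,\beta}(\rho)>0$ exactly on $[0,\rho_{\mathcal{C}})$. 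Because both estimates above are independent of $n$, the conclusion holds simultaneously for every $n\ge2$, so each section $s_n(z;f)$ is close-to-convex of order $\beta$ in $|z|<\rho_{\mathcal{C}}$.

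The main obstacle is the sharp real-part estimate $\Re f'(z)\ge(1-\rho)^{\alpha}$. Unlike the modulus bound $|f'(z)|\ge(1-\rho)^{\alpha}$, which is immediate from $|1-\zeta|\ge1-|\zeta|\ge1-\rho$, the real-part version requires showing that $\min_{|\zeta|\le\rho}\Re(1-\zeta)^{\alpha}$ is attained at the real point $\zeta=\rho$. I would establish this by observing that $\Re(1-\zeta)^{\alpha}$ is harmonic, so its minimum over $\overline{\mathbb{D}}_{\rho}$ is attained on $|\zeta|=\rho$, and then verifying via a short boundary computation (writing $1-\zeta=re^{i\phi}$ with $|\phi|\le\arcsin\rho<\pi/2$, so that $\cos\alpha\phi>0$, and checking that $\phi=0$ gives the minimum of $r^{\alpha}\cos\alpha\phi$) that the extremum occurs precisely at $\zeta=\rho$, where the value is $(1-\rho)^{\alpha}$. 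Everything else reduces to the routine verification of the sign and monotonicity of $K_{\alpha,\beta}$ already indicated above.
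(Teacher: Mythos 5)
Your proposal is correct and follows essentially the same route as the paper: both proofs decompose $s_n'(z;f)=f'(z)-\sigma_n'(z;f)$, bound the tail uniformly in $n$ via \eqref{2.6}, bound $\Re f'(z)$ from below by $(1-\rho)^{\alpha}$ using the subordination \eqref{lemma2.2b}, and thereby reduce the claim to the positivity of $K_{\alpha,\beta}(\rho)=\psi_{11}(\alpha,\rho)-\beta$. Your two additions only make that same argument more complete: you actually justify the real-part estimate $\Re f'(z)\ge(1-\rho)^{\alpha}$, which the paper asserts without proof (your boundary argument works, and can be streamlined via the inequality $\Re w^{\alpha}\ge(\Re w)^{\alpha}$ valid for $\Re w>0$ and $0<\alpha\le1$), and you verify the monotonicity and sign change of $K_{\alpha,\beta}$ linking "least positive root" to the inequality, while correctly observing that the paper's separate treatment of $s_2$ and $s_3$ is unnecessary since the general estimate already covers all $n\ge2$.
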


\begin{proof} 
Let $ f\in \mathcal{G}(\alpha)$. Using Lemma \ref{lemma2.1},  we obtain 
\[\Re\left(  s_2'(z; f)\right)\geq 1-2|a_2||z| \geq 1-\alpha|z|>\beta, \]
if
\[\rho<\dfrac{1-\beta}{\alpha}:=\psi_9(\alpha,\beta).\]
Note that
\[\dfrac{\partial }{\partial \beta}\psi_9(\alpha,\beta)=-\,\dfrac{1}{\alpha}<0.\]
Therefore
\[\rho<\psi_9(\alpha,\beta)\le\psi_9(\alpha,0)=\dfrac{1}{\alpha}:=\rho_9.\]
Note that $\rho_9\ge 1$ for all $\alpha \in(0,1]$. Thus, $ s_2(z; f)$ is close-to-convex of order $\beta$ in $\mathbb D$, for all $\alpha \in(0,1]$. Further using Lemma \ref{lemma2.1}, we obtain
\[\Re\left(  s_3'(z; f)\right)\geq 1-2|a_2||z|-3|a_3||z|^2 \geq 1-\alpha|z|-\dfrac{\alpha}{2}|z|^2>\beta, \]
if
\[\rho<\sqrt{1+\dfrac{2(1-\beta)}{\alpha}}-1:=\psi_{10}(\alpha,\beta).\]
Note that
\[\dfrac{\partial}{\partial \beta} \psi_{10}(\alpha,\beta)=\dfrac{-1}{\alpha \sqrt{1+\dfrac{2(1-\beta)}{\alpha}}}<0.\]
Therefore 
\[\rho<\psi_{10}(\alpha,\beta)\le\psi_{10}(\alpha,0)=\sqrt{1+\dfrac{2}{\alpha}}-1:=\rho_{10}<1,\]
if $\alpha \in (2/3,1]$, and $\rho_{10}\ge1$ if $\alpha \in (0,2/3]$. Thus, $ s_3(z; f)$ is close-to-convex of order $\beta$ in the disk $|z|<\rho_{10}$ if $\alpha \in(2/3,1]$ and $ s_3(z; f)$ is close-to-convex of order $\beta$ in whole $\mathbb D$ if $\alpha \in(0,2/3]$.

Next, we consider the general case for all $n \ge 2$. Using \eqref{lemma2.2b} and \eqref{2.6}, we obtain
\begin{align}\label{ctcg(a)re} 
    \Re( s_n'(z; f)) = \Re( f'(z)-\sigma'_n(z; f))
&\geq (1-\rho)^{\alpha} - |\sigma'_n(z; f)| \\
&=(1-\rho)^{\alpha}+\alpha(\ln(1-\rho)+\rho):=\psi_{11}(\alpha,\rho). \notag
\end{align}
Note that
\begin{align*}
\dfrac{\partial }{\partial \alpha}\psi_{11}(\alpha,\rho)=\rho+\left(1+(1-\rho)^{\alpha}\right)\ln(1-\rho)<0,
\end{align*}
in the interval $(0,1)$, for all $\alpha \in (0,1]$. Therefore
\[\psi_{11}(\alpha,\rho)\ge \psi_{11}(1,\rho)=1+\ln(1-\rho)>0,\]
for $|z|<\rho_{11}$, where $\rho_{11}\approx 0.6321$. Thus
\[\min \{ \rho_9, \rho_{10}, \rho_{11} \}=\rho_{11}=\rho_{\mathcal{C}}.\] 
Now, using \eqref{ctcg(a)re}, we obtain
\[\Re\left( s_n'(z; f) \right)\ge \psi_{11}(\alpha,\rho) > \beta,\quad |z|<\rho_{\mathcal{C}}. \]
Therefore, $ s_n(z; f)$ is close-to-convex of order $\beta$ in $|z|<\rho_{\mathcal{C}}$ for all $n\ge 2$, where $\rho_{\mathcal{C}}$ is the least positive root of $K_{\alpha, \beta}(\rho)= 0$ in $[0,1)$ for all $\alpha \in(0,1]$ and  $\beta \in[0,1)$, where $K_{\alpha,\beta}(\rho)$ is given by \eqref{2.12}. This completes the proof.
\end{proof}

Further, using Theorem \ref{Th2.2}, we yield the following result:
\begin{corollary}\label{c3}
Let $ f(z)=z+\sum_{k=2}^{\infty} a_k z^k \in \mathcal{G}$. Then $ s_n(z; f)$ is close-to-convex in $|z|<\rho_{\mathcal{C}_1}$ for all $n\ge 2$, where $\rho_{\mathcal{C}_1}\approx 0.6321$ is the unique root of $K(\rho)=0$ in $[0,1)$, where $K(\rho)$ is given by
\[K(\rho)=1+\ln(1-\rho).\]
\end{corollary}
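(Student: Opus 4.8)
The plan is to reduce the statement to a lower bound on $\Re(s_n'(z;f))$. In the definition of $\mathcal{C}(\beta)$ one may take the convex comparison function $\psi(z)=z$ and $\theta=0$, so it suffices to prove that $\Re(s_n'(z;f))>\beta$ holds throughout $|z|<\rho_{\mathcal{C}}$. Note that this single inequality also forces $s_n'(z;f)\neq0$ (since $\beta\ge0$), so no separate local-univalence check is required, in contrast to the convex and starlike cases of Theorems \ref{Th2.0} and \ref{Th2.1}. Moreover $s_n'(z;f)$ is a polynomial, so $\Re(s_n'(z;f))$ is harmonic on the closed disk and, by the minimum principle, it is enough to verify the estimate on the circle $|z|=\rho$.

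I would first handle the base cases $n=2,3$ by direct coefficient estimates, as in the earlier theorems. The sharp bound $|a_k|\le\alpha/(k(k-1))$ of Lemma \ref{lemma2.1} gives $\Re(s_2'(z;f))\ge1-\alpha\rho$ and $\Re(s_3'(z;f))\ge1-\alpha\rho-(\alpha/2)\rho^2$; requiring each to exceed $\beta$ and then minimizing the resulting threshold over $\beta\in[0,1)$ produces explicit radii $\rho_9$ and $\rho_{10}$, both of which exceed $1-1/e$ for every $\alpha\in(0,1]$, so neither will be the binding constraint.

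For the general case $n\ge2$ I would peel off the tail, writing $s_n'(z;f)=f'(z)-\sigma_n'(z;f)$ with $\sigma_n(z;f)=\sum_{k=n+1}^\infty a_kz^k$, so that
\[
\Re(s_n'(z;f))\ \ge\ \Re(f'(z))-|\sigma_n'(z;f)|.
\]
The subordination $f'(z)\prec(1-z)^\alpha$ from \eqref{lemma2.2b} furnishes $\Re(f'(z))\ge(1-\rho)^\alpha$ on $|z|=\rho$, while \eqref{2.6} bounds the tail by $|\sigma_n'(z;f)|\le-\alpha(\ln(1-\rho)+\rho)$ uniformly in $n$. Combining the two yields the $n$-independent lower bound
\[
\Re(s_n'(z;f))\ \ge\ (1-\rho)^\alpha+\alpha\bigl(\ln(1-\rho)+\rho\bigr)\ =:\ \psi_{11}(\alpha,\rho).
\]

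The last step is to minimize $\psi_{11}(\alpha,\rho)$ over $\alpha\in(0,1]$ at fixed $\rho$. Differentiation gives $\partial_\alpha\psi_{11}=\rho+\bigl(1+(1-\rho)^\alpha\bigr)\ln(1-\rho)$, which is negative because $-\ln(1-\rho)>\rho$; hence $\psi_{11}$ is decreasing in $\alpha$ and the extremal value occurs at $\alpha=1$, where $\psi_{11}(1,\rho)=1+\ln(1-\rho)$. This is positive exactly when $\rho<1-1/e\approx0.6321=:\rho_{11}$, and the equation $\psi_{11}(\alpha,\rho)=\beta$ is precisely $K_{\alpha,\beta}(\rho)=0$. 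Setting $\rho_{\mathcal{C}}=\min\{\rho_9,\rho_{10},\rho_{11}\}=\rho_{11}$ then gives $\Re(s_n'(z;f))\ge\psi_{11}(\alpha,\rho)>\beta$ on all of $|z|<\rho_{\mathcal{C}}$, which is the assertion. The one point demanding genuine care is the subordination estimate $\Re(f'(z))\ge(1-\rho)^\alpha$: one must confirm that $\min_{|\zeta|\le\rho}\Re\bigl((1-\zeta)^\alpha\bigr)$ is attained at $\zeta=\rho$ for $\alpha\in(0,1]$, which follows from the minimum principle and a short boundary computation; every remaining estimate is routine, making this the most transparent of the three radius theorems.
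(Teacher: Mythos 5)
Your proposal is correct and follows essentially the same route as the paper: the paper deduces Corollary \ref{c3} by specializing Theorem \ref{Th2.2} to $\alpha=1$, $\beta=0$, and your argument reproduces that theorem's proof — the same base cases $n=2,3$ via Lemma \ref{lemma2.1}, the same decomposition $s_n'=f'-\sigma_n'$ with the subordination bound $\Re(f'(z))\ge(1-\rho)^\alpha$ and the tail bound \eqref{2.6}, and the same monotonicity of $\psi_{11}$ in $\alpha$ leading to $1+\ln(1-\rho)>0$ for $\rho<1-1/e$. If anything, you are more careful than the paper, which uses $\Re(f'(z))\ge(1-\rho)^\alpha$ without comment, whereas you correctly flag that this needs the boundary verification that $\Re\bigl((1-\zeta)^\alpha\bigr)$ is minimized at $\zeta=\rho$ for $\alpha\in(0,1]$.
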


 \medskip
\subsection{Inequalities and properties of the sections}
\begin{theorem}\label{Th2.5}
Let $ f \in \mathcal{G}$. Then, for each $n\ge 2$, we have
\begin{equation}\label{2.16}
\bigg|\dfrac{ s_n(z; f)}{ f(z)}-1 \bigg| \leq |z|^n\left( \dfrac{1}{n(n+1)} + \dfrac{2|z|}{\sqrt{3}(1-|z|)}\right),\quad |z|=\rho<1.
\end{equation}
\end{theorem}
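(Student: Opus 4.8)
The plan is to reduce everything to an estimate on the tail $\sigma_n(z;f)=\sum_{k=n+1}^\infty a_k z^k$, exactly as in the derivative analogue recorded in Lemma \ref{lemma2.03}. Since $s_n(z;f)-f(z)=-\sigma_n(z;f)$ and $f\in\mathcal G$ is univalent, so that $f(z)\neq 0$ for $0<|z|<1$, the quotient
\[
\frac{s_n(z;f)}{f(z)}-1=\frac{-\sigma_n(z;f)}{f(z)}=-\frac{\sigma_n(z;f)}{z}\cdot\frac{z}{f(z)}
\]
is analytic in $\mathbb D$ and vanishes to order $n$ at the origin, its coefficient of $z^n$ being exactly $-a_{n+1}$. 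This already dictates the shape of the bound: the summand $|z|^n/(n(n+1))$ ought to come from this leading coefficient, and the second summand from the remaining higher-order tail.

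I would first record the two analytic inputs, both specialised to $\mathcal G=\mathcal G(1)$. From Lemma \ref{lemma2.1} the leading coefficient satisfies $|a_{n+1}|\le 1/(n(n+1))$, which will account for the first term. From \eqref{lemma2.2c} with $\alpha=1$ one has $f(z)/z\prec 1-z/2$, so the values of $f/z$ lie in the disc $\{w:|w-1|<1/2\}$; in particular $|f(z)/z|>1/2$, giving the bound $|z/f(z)|<2$ for the reciprocal. The same subordination feeds Rogosinski's theorem (Lemma \ref{rog}): comparing the Taylor coefficients of $f/z$ with those of $1-z/2$ produces an $\ell^2$ control of the type $\sum_{k\ge 2}|a_k|^2\le 1/4$ on the coefficients, which is what governs the tail.

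With these in hand I would write $\frac{s_n(z;f)}{f(z)}-1=-a_{n+1}z^n+z^{n+1}G(z)$, where $G$ gathers the higher-order terms, and estimate the two pieces separately. The first piece is bounded by $|z|^n/(n(n+1))$ directly from the coefficient estimate. For the contribution of $G$ I would bound the remaining tail $\sum_{k\ge n+2}a_k z^{k-1}$ by Cauchy--Schwarz, writing each summand as $a_k\cdot z^{k-1}$: the coefficient factor is controlled by $\big(\sum_{k\ge 2}|a_k|^2\big)^{1/2}$ from Rogosinski, while the geometric factor $\big(\sum_{k\ge n+2}|z|^{2(k-1)}\big)^{1/2}=|z|^{n+1}/\sqrt{1-|z|^2}$ is simplified using $\sqrt{1-|z|^2}\ge 1-|z|$ to produce the $1/(1-|z|)$ shape; the factor $|z/f|<2$ then supplies the numerical constant. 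Assembling by the triangle inequality gives a bound of precisely the stated form.

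The delicate point is the constant bookkeeping in the second term. One must arrange the splitting so that the factor $|z/f|<2$ multiplies only the tail and not the leading coefficient (otherwise the first summand would be inflated beyond $1/(n(n+1))$), and this forces one to handle the cross-terms created when $-a_{n+1}z^n$ is peeled off inside the quotient. I expect the genuine obstacle to be verifying that the particular combination of the $\ell^2$ coefficient bound, the geometric sum, and the denominator estimate collapses to exactly the constant $2/\sqrt3$ in the claimed inequality, rather than to a merely comparable (and in fact smaller) quantity; pinning down that precise constant, and confirming the $1/(1-|z|)$ rather than $1/\sqrt{1-|z|^2}$ normalisation, is where the care is required.
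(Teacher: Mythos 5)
Your route is viable and genuinely different from the paper's, but it has a real gap at exactly the step you flag. For the comparison: the paper never isolates the tail $\sigma_n(z;f)/f(z)$ and never uses Cauchy--Schwarz. It writes $z/f(z)=1+\sum_{k\ge1}b_kz^k$ and expands the product $s_n(z;f)/f(z)=(1+a_2z+\cdots+a_nz^{n-1})(1+b_1z+b_2z^2+\cdots)=1+\sum_{m\ge n}e_mz^m$; the convolution identity coming from $(f/z)\cdot(z/f)\equiv1$ shows that the coefficients of $z,\dots,z^{n-1}$ vanish and that $e_n=-a_{n+1}$ exactly, while for $m\ge n+1$ each coefficient is bounded \emph{uniformly} by $|e_m|\le\sum_{k=2}^{n}\frac{|b_{m-k+1}|}{k(k-1)}+|b_m|\le\frac{2}{\sqrt3}$, using Lemma \ref{lemma2.1} together with $|b_k|\le 1/\sqrt3$, which comes from Rogosinski (Lemma \ref{rog}) applied to $z/f(z)\prec 1/(1-z/2)$ --- i.e.\ to the reciprocal side of \eqref{lemma2.2c}, not to $f/z$ as you do. Summing $\sum_{m\ge n+1}|e_m||z|^m$ as a geometric series then gives \eqref{2.16} with no cross-terms at all; the constant $2/\sqrt3$ appears transparently as $\frac{1}{\sqrt3}\left(1+\sum_{k\ge2}\frac{1}{k(k-1)}\right)$.

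The gap: your cross-term $a_{n+1}z^n\left(\frac{z}{f(z)}-1\right)$ must be estimated, and the obvious Schwarz-lemma bound $\left|\frac{z}{f(z)}-1\right|\le|z|$ is \emph{not} good enough. With it, your second piece is bounded by $|z|^{n+1}\left(\frac{1}{n(n+1)}+\frac{1}{\sqrt{1-|z|^2}}\right)$, and the inequality you then need (after multiplying by $(1-r)/|z|^{n+1}$, with $r=|z|$, at $n=2$) reads $\frac{1-r}{6}+\sqrt{\frac{1-r}{1+r}}\le\frac{2}{\sqrt3}$, which fails near $r=0$ since $\frac16+1=\frac76\approx1.1667>\frac{2}{\sqrt3}\approx1.1547$. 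The repair stays inside your own toolkit: the subordination $f/z\prec1-z/2$ gives $\frac{z}{f(z)}-1\prec\frac{z}{2-z}$, hence $\left|\frac{z}{f(z)}-1\right|\le\frac{|z|}{2-|z|}$, and the requirement becomes $\frac{1-r}{n(n+1)(2-r)}+\sqrt{\frac{1-r}{1+r}}\le\frac{2}{\sqrt3}$, whose left side is decreasing in $r$ and at $r=0$ equals $\frac{1}{2n(n+1)}+1\le\frac{13}{12}<\frac{2}{\sqrt3}$; so your argument closes. (Alternatively, bound the tail's $\ell^2$ norm by $\sum_{k\ge4}\frac{1}{k^2(k-1)^2}$ via Lemma \ref{lemma2.1} instead of Rogosinski's $\frac14$; that leaves enough slack even for the cruder cross-term bound.) As written, though, the constant $2/\sqrt3$ is not reached, so the sharper cross-term estimate must be supplied for the proposal to constitute a proof.
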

\begin{proof}
Let $ f \in \mathcal{G}$. Since $ f \in \mathcal{G}$ is univalent in $\mathbb{D}$, $z/ f$ can be represented in the form 
\[\dfrac{z}{ f(z)}=1+b_1z+b_2z^2+\dots \quad,\]
where $b_1=-a_2$, $b_2=a_{2}^2+a_3$ and so on, for some complex coefficients $b_n$, $n\geq 1$.
Now considering the identity $\dfrac{ f(z)}{z}\dfrac{z}{ f(z)}\equiv 1 $, we have
\[(1+a_2z+a_3z^2+\dots)(1+b_1z+b_2z^2+\dots)= 1, \quad z \in \mathbb{D}. \]
From which, we obtain
\begin{equation}\label{2.17}
 a_m + \sum_{k=1}^{m-1}b_ka_{m-k}=0,
\end{equation}
where $m\geq 2$ and $a_1=1$.
Now using \eqref{2.17} and the series representation of section $ s_n(z; f)$, we obtain
\begin{align}\label{2.18}
\dfrac{ s_n(z; f)}{ f(z)}=&(1+a_2 z+\dots+a_n z^{n-1})(1+b_1z+b_2z^2+\dots) \\
\equiv & 1+e_nz^n+e_{n+1}z^{n+1}+\dots \quad, \notag
\end{align}
where 
\begin{align}\label{2.19}
e_n=&\; a_n b_1+ a_{n-1} b_2+\dots+a_2 b_{n-1}+ b_n,\\
e_{n+1}=&\; a_n b_2+ a_{n-1}b_3+\dots+a_2 b_n + b_{n+1},\notag \\
e_{n+2}=&\; a_n b_3+a_{n-1} b_4+\dots+a_2 b_{n+1}+ b_{n+2},\quad \;\;(a_1=1) \notag
\end{align}
and so on.
Putting $m=n+1$ in \eqref{2.17} and comparing with \eqref{2.19}, we obtain
\begin{equation*}
 e_n=-a_{n+1}.
\end{equation*}
Now, using Lemma \ref{lemma2.1}, we obtain
\begin{equation}\label{2.20}
|e_n|=(n+1)|a_{n+1}|\leq \dfrac{1}{n(n+1)}.
\end{equation}
Observing the series $e_{n+1}$, $e_{n+2}$, and so on, for $m\geq n+1$, we obtain
\begin{equation*}
e_m=a_n b_{m-n+1}+a_{n-1} b_{m-n+2}+\dots+a_2 b_{m-1}+ b_m.
\end{equation*}
Using Lemma \ref{lemma2.1}, we obtain
\begin{align}\label{2.21}
|e_m|\leq & |a_n|| b_{m-n+1}|+|a_{n-1}|| b_{m-n+2}|+\dots+|a_2|| b_{m-1}|+| b_m|  \\
\leq & \dfrac{1}{n(n-1)}| b_{m-n+1}|+\dfrac{1}{(n-1)(n-2)}| b_{m-n+2}|+\dots+\dfrac{1}{2}| b_{m-1}|+|d_m| \notag \\
= & \sum_{k=2}^{n} \dfrac{1}{k(k-1)}| b_{m-k+1}|+|b_m|.\notag
\end{align}
From \eqref{lemma2.2c} for $\alpha=1$, we obtain
$\dfrac{z}{ f(z)}\prec \dfrac{1}{1-z/2}=1+\sum_{k=1}^{\infty} \dfrac{1}{2^k}z^k$.
Now, using Lemma \ref{rog}, we obtain
\[ \sum_{k=1}^{n}|b_k|^2 \leq \sum_{k=1}^{n} \dfrac{1}{2^{2k}}=\dfrac{1}{3}\left(1-\dfrac{1}{4^n} \right). \]
It follows 
$$\sum_{k=1}^{n}|b_k|^2 \leq \sum_{k=1}^{\infty}|b_k|^2 \leq \dfrac{1}{3}.$$ Therefore
\begin{equation}\label{2.22}
|b_k|^2 \leq \dfrac{1}{3} \Rightarrow |b_k| \leq \dfrac{1}{\sqrt{3}},
\end{equation}
for each $k \in \mathbb{N}$. Now using \eqref{2.21} and \eqref{2.22} for $m\geq n+1$, we obtain
\begin{equation}\label{2.23}
|e_m|\leq  \dfrac{1}{\sqrt{3}} \sum_{k=2}^{n} \dfrac{1}{k(k-1)} + \dfrac{1}{\sqrt{3}}= \dfrac{2}{\sqrt{3}}.
\end{equation}
Again using \eqref{2.20} and \eqref{2.23} in \eqref{2.18}, we obtain
\begin{align}
\bigg| \dfrac{ s_n(z; f)}{ f(z)}-1 \bigg| =& |e_nz^n+e_{n+1}z^{n+1}+\dots|  \\
\leq & |e_n||z^n|+|e_{n+1}||z|^{n+1}+\dots \notag \\
\leq & |z|^n\left(\dfrac{1}{n(n+1)}+\dfrac{2}{\sqrt{3}}|z|+\dfrac{2}{\sqrt{3}}|z|^2\dots\right) \notag \\
=& |z|^n \left( \dfrac{1}{n(n+1)}+\dfrac{2|z|}{\sqrt{3}(1-|z|)}\right),\notag
\end{align}
for $|z|=\rho<1$ and for $n \ge 2$. This completes the proof of the theorem.
\end{proof}

 \medskip
\begin{remark}
    If $ f \in \mathcal{G}$, then in view of Corollary \ref{c3}, we observe that the radius of the disk for which
    \[\Re( s_n'(z; f))\ge1-\sum_{k=2}^{n}k|a_k||z|^{k-1}>0,\]
    is decreasing as $n$ is increasing, but it remains constant for $n\geq 17$, which can be seen in Fig. \ref{fig:ctcgp2}. The following result is devoted to establishing this fact.
\end{remark}

\begin{figure}[htbp]
\centering
\includegraphics[scale=0.6]{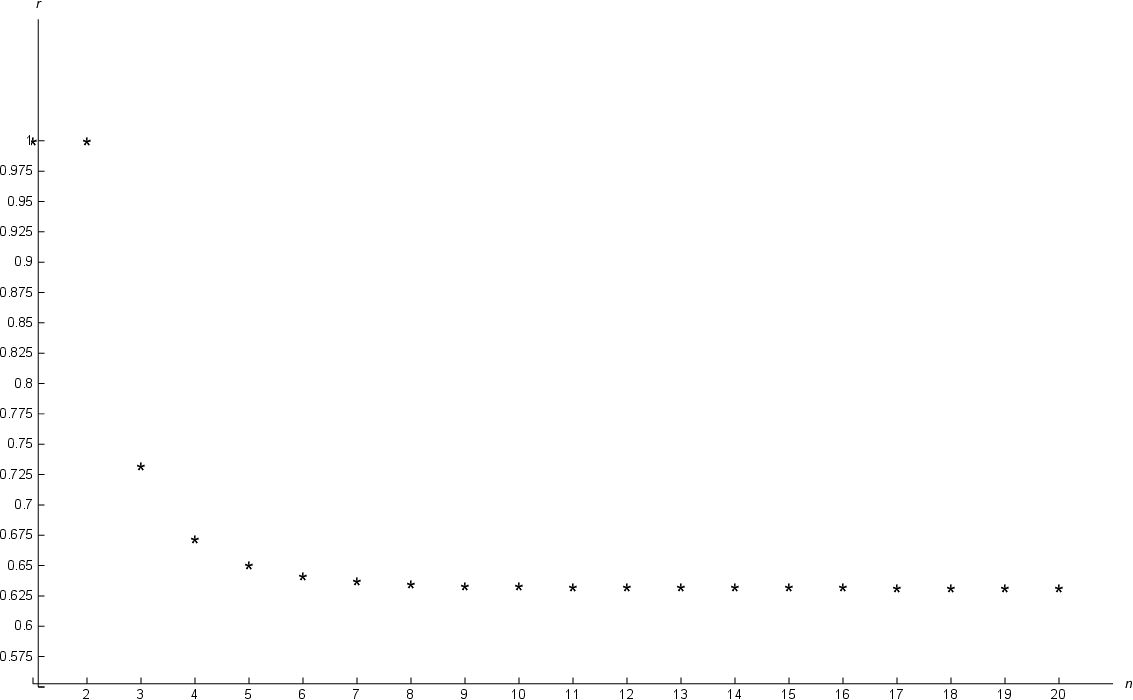}
  \caption{Graph between $n$ and approximate value of radius of disk in which $\sum_{k=2}^{n}k|a_k||z|^{k-1}\le 1$.}
  \label{fig:ctcgp2}
\end{figure}

\begin{theorem}\label{Th2.6}
Let $ f \in \mathcal{G}$. Then $\Re( s_n'(z; f))>0$ in $|z|\leq 0.6321$ for $n\geq 17$.
\end{theorem}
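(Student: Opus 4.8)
The plan is to bound $\Re\big(s_n'(z;f)\big)$ directly through the Taylor coefficients, exactly as the Remark preceding the statement suggests. Since $s_n'(z;f)=1+\sum_{k=2}^n k a_k z^{k-1}$, for $|z|=\rho$ I would start from
\[
\Re\big(s_n'(z;f)\big)\;\ge\;1-\sum_{k=2}^{n}k\,|a_k|\,\rho^{k-1},
\]
and then feed in the sharp coefficient bound of Lemma \ref{lemma2.1}. With $\alpha=1$ this gives $k|a_k|\le 1/(k-1)$, so after setting $j=k-1$ the whole problem collapses onto a single explicit polynomial,
\[
\Re\big(s_n'(z;f)\big)\;\ge\;1-T_{n-1}(\rho),\qquad T_{m}(\rho):=\sum_{j=1}^{m}\frac{\rho^{j}}{j}.
\]
Everything then reduces to showing $T_{n-1}(0.6321)\le 1$.

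The mechanism I would exploit is that $T_m(\rho)$ increases in $m$ to the closed-form limit $\sum_{j=1}^{\infty}\rho^j/j=-\ln(1-\rho)$, and that $-\ln(1-\rho)=1$ exactly at $\rho=1-1/e=0.63212\ldots$. Hence for every finite $n$ one has the strict inequality $T_{n-1}(\rho)<-\ln(1-\rho)$, and at $\rho=0.6321\le 1-1/e$ this already forces $T_{n-1}(0.6321)<1$, i.e.\ $\Re\big(s_n'(z;f)\big)>0$ on the whole closed disk $|z|\le 0.6321$. This logarithmic value is precisely the root $\rho_{\mathcal{C}_1}\approx 0.6321$ of Corollary \ref{c3}, which is the reassuring consistency check tying the section-derivative estimate to the close-to-convexity radius.

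Locating the range $n\ge 17$ is the part that needs genuine care. For each $n$ let $r_n\in(0,1)$ solve $T_{n-1}(r_n)=1$; this is exactly the radius graphed in Fig.\ \ref{fig:ctcgp2}. Monotonicity of $T_{n-1}$ in $n$ makes $r_n$ strictly decreasing with $r_n\downarrow 1-1/e$, and writing $1-r_n=e^{-1}e^{-R_n}$ with $R_n=\sum_{j\ge n}r_n^{\,j}/j$ lets me expand $r_n=(1-1/e)+R_n/e+O(R_n^2)$. Estimating the tail $R_n$ at $\rho\approx 1-1/e$ (where $r_n^{\,j}\approx e^{-0.459\,j}$) then separates the cases: $r_{16}\approx 0.63216$ still rounds to $0.6322$, whereas $r_{17}\approx 0.63214$ is the first value rounding to $0.6321$, and $r_n$ stays at $0.6321$ thereafter. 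This is the stabilization recorded in the Remark and is what fixes the threshold in the statement.

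The main obstacle, then, is not the positivity of $\Re(s_n')$, which my estimate already delivers for every $n\ge 2$ on $|z|\le 0.6321$, but the sharp numerical separation that makes $n=17$ the exact cut-off. Controlling the logarithmic tail $R_n$ finely enough to distinguish $r_{16}$ from $r_{17}$ after rounding to four decimals is the one step I would have to carry out with honest numerics rather than a soft asymptotic bound; the rest of the argument is the elementary coefficient estimate together with the series $\sum_{j\ge1}\rho^j/j=-\ln(1-\rho)$.
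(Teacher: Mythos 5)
Your proof is correct, and it takes a genuinely different route from the paper's — in fact it establishes more than the statement asks. Your chain
\[
\Re\left(s_n'(z;f)\right)\;\ge\;1-\sum_{k=2}^{n}k\,|a_k|\,\rho^{k-1}\;\ge\;1-\sum_{j=1}^{n-1}\frac{\rho^{j}}{j}\;>\;1+\ln(1-\rho),
\]
which uses only the coefficient bound of Lemma \ref{lemma2.1} (with $\alpha=1$, so $k|a_k|\le 1/(k-1)$) and the series $\sum_{j\ge 1}\rho^{j}/j=-\ln(1-\rho)$, together with the numerical fact $0.6321<1-e^{-1}=0.63212\ldots$, yields $\Re(s_n'(z;f))>0$ on the closed disk $|z|\le 0.6321$ for \emph{every} $n\ge 2$, not merely $n\ge 17$. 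The paper instead argues through arguments rather than real parts: it uses the subordination $f'\prec 1-z$ to get $|\arg f'(z)|\le \sin^{-1}(0.6321)<39.206^{\circ}$ on $|z|=0.6321$, and Lemma \ref{lemma2.03} (with $\rho=4/5$) to get $|\arg (s_n'(z;f)/f'(z))|\le \sin^{-1}(C_n)$, concluding via $|\arg s_n'|\le |\arg f'|+|\arg(s_n'/f')|<\pi/2$; the hypothesis $n\ge 17$ is exactly what makes $\sin^{-1}(C_n)<50.794^{\circ}$. The threshold $17$ is therefore an artifact of the comparatively crude tail constant in Lemma \ref{lemma2.03}, and your elementary estimate shows it can be dispensed with — indeed, your bound is essentially the one the paper itself uses in proving Theorem \ref{Th2.2}, whose Corollary \ref{c3} already contains this statement for all $n\ge 2$, since the closed disk $|z|\le 0.6321$ sits inside $|z|<\rho_{\mathcal{C}_1}=1-e^{-1}$.

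One caveat: your final two paragraphs, which labor to separate $r_{16}$ from $r_{17}$ by rounding to four decimals, are not needed and slightly misread what must be proved. The theorem asserts the property \emph{for} $n\ge 17$; it does not assert that $17$ is a sharp cut-off, so once positivity is proved for all $n\ge 2$ there is nothing left to verify. That rounding analysis concerns the Remark and Fig.~\ref{fig:ctcgp2}, not the theorem, so no ``honest numerics'' beyond the single inequality $-\ln(1-0.6321)<1$ are required.
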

\begin{proof}
If $ f \in \mathcal{G}$, then $ f'(z) \prec 1-z$. Therefore, using the maximum modulus principle for $|z|\leq 0.6321$, we obtain
\begin{equation}\label{2.24}
\underset{|z|= 0.6321}{\max} |\arg  f'(z)|\leq \sin^{-1}(0.6321)< 39.206^{\circ}.
\end{equation}
Using Lemma \ref{lemma2.03} for $|z|= 0.6321$ and $\rho=4/5$, we obtain
\begin{equation*}
\bigg|\dfrac{ s_n'(z; f)}{ f'(z)}-1 \bigg| \leq C_n,\quad |z|< 0.6321,
\end{equation*}
where $C_n=(0.6321)^n\left( \dfrac{1}{n} + \dfrac{5^n \times 42.09795334 }{4^n} \right)$.
Using the maximum modulus principle, it follows that
\begin{equation}\label{2.25}
\underset{|z|= 0.6321}{\max} \bigg|\arg \dfrac{ s_n'(z; f)}{ f'(z)}\bigg|\leq \sin^{-1}(C_n),\quad |z|< 0.6321.
\end{equation}
Further, using \eqref{2.24} and \eqref{2.25}, we obtain
$$|\arg  s_n'(z; f)|\leq |\arg  f'(z)| +  \bigg|\arg \dfrac{ s_n'(z; f)}{ f'(z)}\bigg|< 39.206^{\circ} + \sin^{-1}(C_n)<\dfrac{\pi}{2}.$$
This holds true if $\sin^{-1}(C_n)< 50.794^{\circ}$. Now the result follows, as the last inequality holds true for $n\geq 17$.
\end{proof}

\begin{remark}
     Theorem \ref{Th2.6} improved the result of Obradovic et~al. \cite[Corollary 4]{obra}, which states that $\Re( s_n'(z; f))>0$ in $|z|<1/2$ for $n\geq 11$.
 \end{remark}

 \medskip
\begin{remark}
    If $ f \in \mathcal{G}$, then in view of Corollary \ref{c2}, we observe that the radius of the disk for which
    \[\Re \left( \dfrac{z s_n'(z; f)}{ s_n(z; f)} \right)\ge 1-\dfrac{\sum_{k=2}^{n}(k-1)|a_k||z|^{k-1}}{1-\sum_{k=2}^{n}|a_k||z|^{k-1}} >0,\]
    is decreasing as $n$ is increasing, but remains constant for $n\geq 17$, which can be seen in Fig. \ref{fig:ctcgp2}. But we observe that it can be improved. The following result is devoted to establishing this fact.
\end{remark}

\begin{theorem}\label{Th2.7}
Let $ f \in \mathcal{G}$. Then $ s_n(z; f)$ is starlike in $|z|\leq 0.5698$ for $n\geq 10$.
\end{theorem}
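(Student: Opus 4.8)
The plan is to mirror the argument-principle technique of Theorem \ref{Th2.6}, but applied to the logarithmic derivative $zs_n'(z;f)/s_n(z;f)$ rather than to $s_n'(z;f)$. Starlikeness of $s_n(z;f)$ in the closed disk $|z|\le 0.5698$ amounts to $\Re\big(zs_n'(z;f)/s_n(z;f)\big)>0$ there, that is to $\big|\arg\big(zs_n'(z;f)/s_n(z;f)\big)\big|<\pi/2$, provided $s_n(z;f)\neq 0$ on $0<|z|\le 0.5698$. The engine of the proof is the factorization
\[
\frac{zs_n'(z;f)}{s_n(z;f)}=\frac{zf'(z)}{f(z)}\cdot\frac{s_n'(z;f)}{f'(z)}\cdot\frac{f(z)}{s_n(z;f)},
\]
which yields
\[
\Big|\arg\frac{zs_n'(z;f)}{s_n(z;f)}\Big|\le\Big|\arg\frac{zf'(z)}{f(z)}\Big|+\Big|\arg\frac{s_n'(z;f)}{f'(z)}\Big|+\Big|\arg\frac{s_n(z;f)}{f(z)}\Big|.
\]
I would bound the three summands separately and show their total stays below $\pi/2$ on $|z|=0.5698$ once $n\ge 10$.

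For the first summand I would use the subordination \eqref{lemma2.2a} with $\alpha=1$, namely $zf'(z)/f(z)\prec 2(1-z)/(2-z)$; by the subordination principle together with the maximum modulus principle,
\[
\Big|\arg\frac{zf'(z)}{f(z)}\Big|\le\max_{|z|=0.5698}\Big|\arg\frac{2(1-z)}{2-z}\Big|.
\]
Writing $z=\rho e^{i\theta}$, a short computation gives $\tan\big(\arg\tfrac{2(1-z)}{2-z}\big)=-\rho\sin\theta/(2+\rho^2-3\rho\cos\theta)$, whose modulus is maximized at $\cos\theta=3\rho/(2+\rho^2)$; evaluating at $\rho=0.5698$ produces the bound $\approx 19.88^{\circ}$. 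For the second summand I would apply Lemma \ref{lemma2.03} with auxiliary radius $\rho=4/5$ (exactly as in Theorem \ref{Th2.6}), obtaining $|s_n'(z;f)/f'(z)-1|\le C_n$ and hence $|\arg(s_n'(z;f)/f'(z))|\le \sin^{-1}(C_n)$, where $C_n=(0.5698)^n\big(1/n+(5^n/4^n)K\big)$ with $K=(\pi/\sqrt6+1)\,(0.5698)\sqrt{0.96}/\big(0.2\cdot(0.8-0.5698)\big)\approx 27.68$. For the third summand I would invoke Theorem \ref{Th2.5}, giving $|s_n(z;f)/f(z)-1|\le D_n$ with $D_n=(0.5698)^n\big(1/(n(n+1))+2(0.5698)/(\sqrt3\,(1-0.5698))\big)$, so that $|\arg(s_n(z;f)/f(z))|\le\sin^{-1}(D_n)$. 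Since $D_n<1$ on the disk, this same estimate forces $s_n(z;f)/f(z)\neq 0$; because $f(z)\neq 0$ for $0<|z|<1$, we get $s_n(z;f)\neq 0$ on $0<|z|\le 0.5698$, which legitimizes the factorization and the reduction to the argument bound.

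Collecting the three bounds, starlikeness on $|z|\le 0.5698$ follows as soon as
\[
19.88^{\circ}+\sin^{-1}(C_n)+\sin^{-1}(D_n)<90^{\circ},
\]
and I would verify that this crossover occurs exactly at $n=10$: with $\rho=4/5$ one finds $C_{10}\approx 0.929<1$, so $\sin^{-1}(C_{10})\approx 68.3^{\circ}$ and $\sin^{-1}(D_{10})\approx 0.3^{\circ}$, whence the left-hand side is about $88.5^{\circ}<90^{\circ}$, whereas $C_9>1$ so the estimate already fails to make sense at $n=9$. The delicate point is precisely this second summand: $C_n$ is exponentially close to the feasibility threshold $C_n=1$ near $n=10$, so the value of the constant $K$ (which depends on the chosen auxiliary radius $4/5$) and the precise first-summand bound $19.88^{\circ}$ must be controlled tightly; the third summand is comfortably negligible and plays no role in determining the threshold.
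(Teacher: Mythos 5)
Your proposal is correct and follows essentially the same route as the paper's own proof: the identical factorization of $zs_n'(z;f)/s_n(z;f)$ into three quotients, the same argument bounds via Lemma \ref{lemma2.03} with auxiliary radius $\rho=4/5$ (your $C_n$ is the paper's $E_n$, with matching constant $\approx 27.68$), the subordination \eqref{lemma2.2a} giving the $19.88^{\circ}$ bound, and Theorem \ref{Th2.5} (your $D_n$ is the paper's $F_n$), with the same crossover check at $n=10$. Your explicit verification that $D_n<1$ forces $s_n(z;f)\neq 0$, legitimizing the factorization, is a small point of extra rigor that the paper leaves implicit.
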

\begin{proof}
Let $ f\in \mathcal{G}$. Using Lemma \ref{lemma2.03} for $|z|= 0.5698$ and $\rho=4/5$, we obtain
\begin{equation*}
\bigg|\dfrac{ s_n'(z; f)}{ f'(z)}-1 \bigg| \leq E_n,\quad |z|\leq 0.5698.
\end{equation*}
where $E_n=(0.5698)^n\left( \dfrac{1}{n} + \dfrac{5^n \times 27.67852953}{4^n} \right)$. Using the maximum modulus principle, it follows that
\begin{equation}\label{2.26}
\underset{|z|= 0.5698}{\max} \bigg|\arg \dfrac{ s_n'(z; f)}{ f'(z)}\bigg|\leq \sin^{-1}(E_n),\quad |z|\leq 0.5698.
\end{equation}
Using \eqref{lemma2.2a} for $\alpha=1$, after a computation, we obtain
\begin{equation}\label{2.27}
\bigg|\arg \dfrac{z  f'(z)}{ f(z)}\bigg|\leq \sin^{-1}\left( \dfrac{0.5698}{2-(0.5698)^2} \right)<19.88378^{\circ},\quad |z|\leq 0.5698.
\end{equation}
Now, using Theorem \ref{Th2.5} for $|z|= 0.5698$, we obtain
\begin{equation*}
\bigg|\dfrac{ s_n(z; f)}{ f(z)}-1 \bigg| \leq F_n,\quad |z|\leq 0.5698.
\end{equation*}
where $F_n=(0.5698)^n\left( \dfrac{1}{n(n+1)} + 1.529401131 \right)$. Using the maximum modulus principle, it follows that
\begin{equation}\label{2.28}
\underset{|z|= 0.5698}{\max} \bigg|\arg \dfrac{ s_n(z; f)}{ f(z)}\bigg|\leq \sin^{-1}(F_n),\quad |z|\leq 0.5698.
\end{equation}
Combining \eqref{2.26}, \eqref{2.27}, and \eqref{2.28}, we obtain
\begin{align*}
\bigg|\arg \dfrac{z  s_n'(z; f)}{ s_n(z; f)}\bigg| \leq & \bigg|\arg \dfrac{ s_n'(z; f)}{ f'(z)}\bigg| + \bigg|\arg \dfrac{z  f'(z)}{ f(z)}\bigg| + \bigg|\arg \dfrac{ f(z)}{ s_n(z; f)}\bigg| \\
<& \sin^{-1}(E_n) + 19.88378^{\circ} + \sin^{-1}(F_n)< \dfrac{\pi}{2} ,
\end{align*}
which holds true if $\sin^{-1}(E_n) + \sin^{-1}(F_n)< 70.116271^{\circ}$. The last result holds true for $n\geq 10$.
\end{proof}

\begin{remark}
     Theorem \ref{Th2.7} improved the result of Obradovic et~al. \cite[Theorem 6]{obra}, which states that $ s_n(z; f)$ of $ f \in \mathcal{G}$ is starlike  in $|z|<1/2$ for $n\geq 11$.
 \end{remark}


\end{document}